\def\rightangle{\vcenter{\hsize5.5pt
    \hbox to5.5pt{\vrule height7pt\hfill}
    \hrule}}
\def\rtangle{\mathrel{\rightangle}}
\numberwithin{equation}{section} \setlength{\oddsidemargin}{.0001in}
\newtheorem{thm}{Theorem}[section]
\newtheorem{defn}[thm]{Definition}
\newtheorem{cor}[thm]{Corollary}
\newtheorem{lemma}[thm]{Lemma}
\newtheorem{rmrk}[thm]{Remark}
\newcommand{\e}{\varepsilon}
\newcommand{\R}{\mathbb{R}}
\newtheorem{prop}[thm]{Proposition}
\newcommand{\abs}[1]{\left\vert{#1}\right\vert}
\newcommand{\ba}{\begin{array}}
\newcommand{\ea}{\end{array}}
\newcommand{\bthm}{\begin{thm}}
\newcommand{\ethm}{\end{thm}}
\newcommand{\bstp}{\begin{stp}}
\newcommand{\estp}{\end{stp}}
\newcommand{\blemma}{\begin{lemma}}
\newcommand{\elemma}{\end{lemma}}
\newcommand{\bprop}{\begin{prop}}
\newcommand{\eprop}{\end{prop}}
\newcommand{\bpf}{\begin{pf}}
\newcommand{\epf}{\end{pf}}
\newcommand{\bdefn}{\begin{defn}}
\newcommand{\dH}{d\mathcal{H}}
\newcommand{\edefn}{\end{defn}}
\newcommand{\brk}{\begin{rmrk}}
\newcommand{\erk}{\end{rmrk}}
\newcommand{\bcrl}{\begin{crl}}
\newcommand{\ecrl}{\end{crl}}
\newcommand{\norm}[1]{\left\|#1\right\|}
\newcommand{\beqn}{\begin{equation}}
\newcommand{\eeqn}{\end{equation}}
\renewcommand{\leq}{\leqslant}
\renewcommand{\geq}{\geqslant}
\newcommand{\beq}{\begin{equation}}
\newcommand{\eeq}{\end{equation}}
\newcommand{\bea}{\begin{eqnarray}}
\newcommand{\eea}{\end{eqnarray}}
\begin{document}
\renewcommand\Authfont{\small}
\renewcommand\Affilfont{\itshape\footnotesize}
\newcommand{\Et}{\tilde{E}_\e^{\gamma}}
\newcommand{\E}{E_{\Omega_\e}^{\gamma}}
\newcommand{\Ez}{E_0^{\gamma}}
\newcommand{\Oe}{\Omega_\e}
\title{Cascade of minimizers for a nonlocal isoperimetric problem in thin domains}

\author[1]{Massimiliano Morini\footnote{massimiliano.morini@unipr.it}}
\author[2]{Peter Sternberg\footnote{sternber@indiana.edu}}
\affil[1]{Department of Mathematics, University of Parma, Parma, Italy}
\affil[2]{Department of Mathematics, Indiana University, Bloomington, IN 47405}

\maketitle
\noindent{\bf Mathematics Subject Classification:} 49J45, 49Q20\\
\noindent{\bf Keywords:} nonlocal isoperimetric, global minimizers
\begin{abstract}
For $\Omega_\e=(0,\e)\times (0,1)$ a thin rectangle, we consider minimization of
the two-dimensional nonlocal isoperimetric
problem given by 
\[
\inf_u E^{\gamma}_{\Omega_\e}(u)\]
where
\[
E^{\gamma}_{\Omega_\e}(u):= P_{\Omega_\e}(\{u(x)=1\})+\gamma\int_{\Omega_\e}\abs{\nabla{v}}^2\,dx
\]
and the minimization is taken over competitors $u\in BV(\Omega_\e;\{\pm 1\})$ satisfying a mass constraint $\fint_{\Omega_\e}u=m$ for
some $m\in (-1,1)$. Here $P_{\Omega_\e}(\{u(x)=1\})$ denotes the perimeter of the set $\{u(x)=1\}$ in $\Omega_\e$, $\fint$ denotes the integral average and $v$ denotes the solution
to the Poisson problem
\[
-\Delta v=u-m\;\mbox{in}\;\Omega_\e,\quad\nabla v\cdot  n_{\partial\Omega_\e}=0\;\mbox{on}\;\partial\Omega_\e,\quad\int_{\Omega_\e}v=0.\]
We show that a striped pattern is the minimizer for $\e\ll 1$ with the number of stripes growing like $\gamma^{1/3}$ as $\gamma\to\infty.$ In the process, we show that
stable lamellar patterns are in fact $L^1$ local minimizers in rectangular domains.
We then present generalizations of this result to higher dimensions.
\end{abstract}

\section{Introduction}
In nonlocal isoperimetric problems currently of interest, one considers a perturbation of
the classical isoperimetric problem by a term that favors high oscillation.
This tension between terms favoring low and high surface area respectively leads
to a rich and not well understood energy landscape. To date, identification of minimizers
has been largely limited to parameter regimes in which the perimeter term dominates and so
it is the purpose of this article to present a setting, namely thin domains, that allows for such an identification at all magnitudes
of the nonlocal perturbation.

To state our problem precisely, given a bounded domain $\Omega\subset\R^n$ and a number $\gamma\geq 0$ we consider the minimization
of the functional
\beq
E^{\gamma}_{\Omega}(u):= P_{\Omega}(\{u(x)=1\})+\gamma\int_{\Omega}\abs{\nabla{v}}^2\,dx\label{Egammaintro}
\eeq
over the set of competitors $u\in BV(\Omega;\{\pm 1\})$ satisfying the mass constraint $\fint_{\Omega}u=m$ for
some $m\in (-1,1)$. Here $P_{\Omega}(\{u(x)=1\})$ denotes the perimeter of the set $\{u(x)=1\}$ in $\Omega$, $\fint$ denotes the integral average and $v$ denotes the solution
to the Poisson problem
\begin{equation}
-\Delta v=u-m\;\mbox{in}\;\Omega,\quad\nabla v\cdot  n_{\partial\Omega}=0\;\mbox{on}\;\partial\Omega,\quad\int_{\Omega}v=0,\label{Poisson}\end{equation}
with $n_{\partial\Omega}$ denoting the outer unit normal to $\partial\Omega$. We recall that $P_{\Omega}(\{u(x)=1\})$ can alternatively be expressed as
$\frac{1}{2}\abs{\nabla u}(\Omega)$ where $\abs{\nabla u}(\Omega)$ denotes the total variation of the vector-valued measure $\nabla u$, cf. \cite{Gu}.

The functional $E^{\gamma}_{\Omega}$ arises as the sharp interface $\Gamma$-limit as $\delta\to 0$ of the Ohta-Kawasaki functional modeling phase separation
in diblock co-polymers
\[
u\mapsto \int_{\Omega}\frac{1}{\delta}(u^2-1)^2+\delta \abs{\nabla u}^2+\gamma\int_{\Omega}\abs{\nabla{v}}^2\,dx,\]
see e.g. \cite{CR,OK,RW1}. Thus, at least on a qualitative level, one expects that minimizers of \eqref{Egammaintro} should bear some resemblance
to the pictures of phase separation reported experimentally in the co-polymer literature, e.g. in \cite{AFH,BF,TAHH}. The most striking feature of these images
in parameter regimes where the nonlocality dominates is the emergence of small periodically arrayed cells inside of which the interface $\partial\{u=1\}$ resembles a
constant mean curvature surface.

Now as we show in Section 3, and as was already studied earlier in e.g. \cite{RT} and \cite{RW1}, in one dimension when $\Omega$ is simply an interval, the problem can be explicitly solved. Here it is easy to see that
minimizers are essentially periodic--up to adjustments at the boundary to accommodate the Neumann boundary conditions--with oscillations on the order of $\gamma^{1/3}$ in the regime $\gamma\gg 1$, a scaling that has previously been noted for example in \cite{CM}. (A similar conclusion for one-dimensional minimizers of
the Ohta-Kawasaki functional can also be drawn but this is nontrivial, see \cite{Mu}.)
When $n\geq 2$ however, the problem becomes quite subtle. To date, the only general result in this direction is that of \cite{ACO} where the authors
show, roughly speaking, that energy tends to distribute uniformly in two dimensions. A corresponding result for Ohta-Kawasaki was obtained more recently in \cite{Sp}.

With regard to characterizing more precisely the global minimizer,
progress up to now has been largely limited to parameter regimes where perimeter dominates. When $\gamma$
is small this includes \cite{ST,T}. There is also a growing literature on asymptotic regimes where $m$ is near $1$ or $-1$, on the setting $\Omega=\R^n$
and on related perturbations of the isoperimetric problem, some of which arise as $\Gamma$-limits of Ohta-Kawasaki under different scalings, see for example 
\cite{BC,CP,CiS,GP,GMS,J1,KM,KM1,M1,PV}. In a different vein, the existence of increasingly intricate critical points and local minimizers for \eqref{Egammaintro} and related nonlocal sharp interface problems has been one thrust of the research program of Ren and Wei, see for example \cite{RW1,RW2,RW3} and the references therein.

In this article we investigate a multi-dimensional setting where we can identify the global minimizer of \eqref{Egammaintro} for {\it all} values of $\gamma$. 
The simplest such example is the case where $\Omega$ is a thin rectangle given by $\Oe:=(0,\e)\times (0,1)$ for $\e$ small. Our main result here, Theorem \ref{main}, states that
for any value of $\gamma$, when $\e$ is sufficiently small, the global minimizer of $E^{\gamma}_{\Oe}$ coincides with the minimizer of the one-dimensional problem
posed on the unit interval. Since the one-dimensional problem is minimized by a piecewise constant function with more and more jumps in the regime $\gamma\gg 1$,
this implies that as $\gamma$ grows the minimizer of the two-dimensional problem exhibits a cascade of oscillations through a pattern of more and more horizontal stripes. The relationship
between the number of stripes $k$ and the value of $\gamma$ is given explicitly in \eqref{kwins}. We then
apply the technique to cover domains of the form $(0,\e)^{\ell}\times(0,1)$ for any positive integer $\ell$ and more general thin domains in Theorems \ref{main2} and \ref{main3}.

Let us describe the main ingredients in 
the method of proof for the main result, Theorem \ref{main}. A first step is the establishing of $\Gamma$-convergence of \eqref{Egammaintro} to a one-dimensional energy in the setting where $\Omega=\Oe$ as $\e\to 0.$ This is accomplished in Section 2. Section 3 contains the explicit identification of the global minimizer of the one-dimensional $\Gamma$-limit alluded to earlier.
In Section 4 we give a proof of the two-dimensional stability of the one-dimensional minimizers. This stability was first addressed in the periodic setting in \cite{CM}, in which a more general machinery was introduced for studying stability of critical points in a variety of regimes, including higher dimensions. 
Through reflection, this yields stability for the Neumann problem of our setting. However, we include our proof here both for the stake of self-containment and because our argument is completely different from the earlier one and we find it to be quite a bit simpler. In Section 5 we first establish the appropriate modifications of  
the stability $\implies$ local minimality results of \cite{AFM} and \cite{Julin} to this setting of Neumann boundary conditions in domains with corner singularities. This in particular yields the new result that stable lamellar patterns are in fact $L^1$ local minimizers.
 Then we synthesize all of these tools to prove the global minimality in two-dimensional thin rectangles of the one-dimensional (lamellar) patterns.  Finally Section 6 contains a few generalizations
to thin domains in arbitrary dimensions.

\section{$\Gamma$-convergence to the $1d$ nonlocal isoperimetric problem }

For $\e>0$ we let $\Omega_\e$ denote the rectangle $(0,\e)\times(0,1)$. Then
for $\gamma>0$ and any $m\in(-1,1)$ we introduce the functional $\E:L^1(\Oe)\to\R$ given by \eqref{Egammaintro}
with $\Omega$ replaced by $\Omega_\e$.
We wish to identify the $\Gamma$-limit of $\E$ as $\e\to 0$
and to this end, given any $u\in BV(\Oe;\{\pm 1\})$, we
denote by $\tilde{u}:\Omega_1\to\R$ the function satisfying $\tilde{u}(x_1,y_1)=u(\e x_1,y_1)$ and readily compute that
\[
P_{\Oe}(\{u(x)=1\})=\e\int_{\partial^*\{\tilde{u}=1\}}\sqrt{\frac{1}{\e^2}n_1^2+n_2^2}\,\dH^1\]
where $(n_1,n_2)$ is the outer normal to the reduced boundary $\partial^*\{\tilde{u}=1\}$ and the integration is with respect
to one-dimensional Hausdorff measure, cf. \cite{Gu}. Similarly if $v=v(x,y)$ is the solution to \eqref{Poisson} associated with $u$, then the
function $\tilde{v}(x_1,y_1):=v(\e x_1,y_1)$ satisfies
\beq
-\frac{1}{\e^2}\tilde{v}_{x_1x_1}-\tilde{v}_{y_1y_1}=\tilde{u}-m\label{epn}\eeq
along with homogeneous Neumann boundary conditions on $\partial\Omega_1$ and zero mean.
Consequently, we have
\[
\int_{\Oe}\abs{\nabla v}^2\,dx\,dy=\e\int_{\Omega_1}\left(\frac{1}{\e^2}\tilde{v}_{x_1}^2+\tilde{v}_{y_1}^2\right)\,dx_1\,dy_1\]
and so
\begin{eqnarray}&&\frac{1}{\e}\E(u)=\tilde{E}_{\e}^{\gamma}(\tilde{u}):=\nonumber\\
&&\left\{\begin{matrix} \int_{\partial^*\{\tilde{u}=1\}\cap\Omega_1}\sqrt{\frac{1}{\e^2}n_1^2+n_2^2}\,\dH^1
+\gamma\int_{\Omega_1}\left(\frac{1}{\e^2}\tilde{v}_{x_1}^2+\tilde{v}_{y_1}^2\right)\,dx_1\,dy_1&\mbox{if}\;\tilde{u}\in BV(\Omega_1;\{\pm 1\}),\;\fint_{\Omega_1}\tilde{u}=m\\
+\infty&\quad\mbox{otherwise,}\end{matrix}\right.\nonumber\\
&&\label{tEdefn}
\end{eqnarray}
We then establish $\Gamma$-convergence of  $\tilde{E}_{\e}^{\gamma}$ to the energy corresponding to the 1d nonlocal isoperimetric problem.
\begin{thm}\label{gc}
As $\e\to 0$, the functionals $\tilde{E}_{\e}^{\gamma}$ $\Gamma$-converge in $L^1(\Omega_1)$ to $E_0^{\gamma}$ given by
\[
E_0^{\gamma}(\tilde{u}):=\left\{\begin{matrix} \frac{1}{2}\abs{\tilde{u}_{y_1}}(\Omega_1)
+\gamma\int_0^1\tilde{v}_{y_1}^2\,dy_1&\mbox{if}\;\abs{\tilde{u}_{x_1}}(\Omega_1)=0,\;\tilde{u}\in BV(\Omega_1;\{\pm 1\}),\;\fint_{\Omega_1}\tilde{u}=m\\
+\infty&\quad\mbox{otherwise,}\end{matrix}\right.
\]
where $\abs{\tilde{u}_{x_1}}$ and $\abs{\tilde{u}_{y_1}}$ denote the total variation
of the measures $\tilde{u}_{x_1}$ and $\tilde{u}_{y_1}$ and $\tilde{v}=\tilde{v}(y_1)$ solves
\beq
-\tilde{v}_{y_1y_1}=\tilde{u}-m\;\mbox{for}\;0<y_1<1,\quad \tilde{v}_{y_1}(0)=0=\tilde{v}_{y_1}(1).\label{1dp}\eeq
\end{thm}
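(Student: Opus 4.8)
The plan is to prove the two halves of the $\Gamma$-convergence statement separately, using the standard framework adapted to the anisotropic scaling present in $\Et$. Throughout I will exploit the fact that the functional splits as the sum of a (localized, anisotropic) perimeter term and a nonlocal term, and that the nonlocal term is continuous with respect to $L^1$ convergence because the solution operator of \eqref{epn}--\eqref{1dp} is compact.

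\textbf{Liminf inequality.} Suppose $\tilde u_\e\to\tilde u$ in $L^1(\Omega_1)$ with $\liminf_\e\Et(\tilde u_\e)<\infty$; I may pass to a subsequence realizing the liminf and along which the energies are bounded. Boundedness of $\int_{\partial^*\{\tilde u_\e=1\}}\sqrt{\e^{-2}n_1^2+n_2^2}\,\dH^1$ forces, first, that $\tilde u\in BV(\Omega_1;\{\pm1\})$ with $\fint\tilde u=m$, and second — since the integrand dominates $\frac1\e\abs{n_1}$ — that $\abs{(\tilde u_\e)_{x_1}}(\Omega_1)\le C\e\to0$, so in the limit $\abs{\tilde u_{x_1}}(\Omega_1)=0$, i.e. $\tilde u=\tilde u(y_1)$ depends on $y_1$ only and the constraint region in the $\Gamma$-limit is met. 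For the perimeter term I use that the anisotropic integrand is bounded below by $\abs{n_2}$, hence $\int_{\partial^*\{\tilde u_\e=1\}}\sqrt{\e^{-2}n_1^2+n_2^2}\,\dH^1\ge\abs{(\tilde u_\e)_{y_1}}(\Omega_1)\ge\abs{\tilde u_{y_1}}(\Omega_1)$ by lower semicontinuity of total variation under $L^1$ convergence; writing $\tilde u$ as a one-dimensional function this equals $\abs{\tilde u_{y_1}}(\Omega_1)$, matching the $\frac12\abs{\tilde u_{y_1}}(\Omega_1)$ of $\Ez$ once one accounts for the $\frac12$ bookkeeping in passing between $P$ and total variation. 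For the nonlocal term, let $\tilde v_\e$ solve \eqref{epn}; testing the equation against $\tilde v_\e$ and using the energy bound gives a uniform $H^1(\Omega_1)$ bound, so up to a further subsequence $\tilde v_\e\warrow\tilde v$ in $H^1$ and strongly in $L^2$; passing to the limit in the weak formulation (the $\e^{-2}\tilde v_{x_1x_1}$ term is controlled because $\e^{-1}\norm{(\tilde v_\e)_{x_1}}_2$ is bounded, forcing $(\tilde v_\e)_{x_1}\to0$ in $L^2$) identifies $\tilde v$ as the solution of \eqref{1dp}, a function of $y_1$ alone; then $\gamma\int_{\Omega_1}(\e^{-2}(\tilde v_\e)_{x_1}^2+(\tilde v_\e)_{y_1}^2)\ge\gamma\int_{\Omega_1}(\tilde v_\e)_{y_1}^2\to\gamma\int_0^1\tilde v_{y_1}^2$ by weak lower semicontinuity.

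\textbf{Limsup inequality (recovery sequence).} Given $\tilde u$ with $\Ez(\tilde u)<\infty$, so $\tilde u=\tilde u(y_1)\in BV((0,1);\{\pm1\})$ with the correct mass, I simply take the constant-in-$x_1$ sequence $\tilde u_\e:=\tilde u$. Then $\partial^*\{\tilde u_\e=1\}$ consists of horizontal segments, on which $n_1=0$, so $\sqrt{\e^{-2}n_1^2+n_2^2}=1$ and the anisotropic perimeter equals exactly $\frac12\abs{\tilde u_{y_1}}(\Omega_1)$ for every $\e$. The associated potential $\tilde v_\e$ is likewise independent of $x_1$ and coincides with the solution $\tilde v$ of \eqref{1dp}, so $(\tilde v_\e)_{x_1}\equiv0$ and $\gamma\int_{\Omega_1}(\tilde v_\e)_{y_1}^2=\gamma\int_0^1\tilde v_{y_1}^2$. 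Hence $\Et(\tilde u_\e)=\Ez(\tilde u)$ for all $\e$ and trivially $\tilde u_\e\to\tilde u$ in $L^1$, giving equality in the limsup bound.

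\textbf{Main obstacle.} The genuinely delicate point is the handling of the nonlocal term in the liminf inequality: one must show that the limit of the potentials $\tilde v_\e$ is the one-dimensional solve $\tilde v$ and not something else. This requires extracting from the energy bound the fact that $\e^{-1}(\tilde v_\e)_{x_1}$ is bounded in $L^2$ — so that $(\tilde v_\e)_{x_1}\to0$ — and then arguing carefully that the distributional limit of $-\e^{-2}(\tilde v_\e)_{x_1x_1}-(\tilde v_\e)_{y_1y_1}=\tilde u_\e-m$ is $-\tilde v_{y_1y_1}=\tilde u-m$; the subtlety is that $\e^{-2}(\tilde v_\e)_{x_1x_1}$ is a product of a blowing-up prefactor with a quantity whose size we only control after one integration, so the passage to the limit must be done in the weak formulation tested against $x_1$-independent functions, where the bad term drops out by the Neumann condition. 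A secondary technical nuisance, common to all such corner-domain $\Gamma$-convergence arguments, is the bookkeeping between $P_{\Omega_1}$, the total variation $\abs{\nabla\tilde u}$, and the one-dimensional total variation $\abs{\tilde u_{y_1}}$, together with making sure the mass constraint is preserved in both directions of the argument; these are routine but must be stated cleanly.
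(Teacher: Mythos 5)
Your proof follows essentially the same route as the paper's: the anisotropic perimeter lower bound plus lower semicontinuity gives both that $\tilde u=\tilde u(y_1)$ and the perimeter inequality, the energy bound together with Poincar\'e yields an $H^1$ bound on the potentials with $(\tilde v_\e)_{x_1}\to0$, testing the rescaled Poisson equation against $x_1$-independent functions identifies the limit potential as the solution of \eqref{1dp}, and the constant recovery sequence makes the limsup inequality an equality. The one slip is a factor of $2$ in the perimeter step: since $\tilde u_\e$ jumps by $2$ across an interface, $\int_{\partial^*\{\tilde u_\e=1\}}|n_2|\,\dH^1=\tfrac12\abs{(\tilde u_\e)_{y_1}}(\Omega_1)$, not $\abs{(\tilde u_\e)_{y_1}}(\Omega_1)$, and once this factor is inserted the chain of inequalities lands exactly on $\tfrac12\abs{\tilde u_{y_1}}(\Omega_1)$ with no residual ``bookkeeping'' to reconcile.
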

\begin{proof}
Given $\tilde{u}\in L^1(\Omega_1)$ let us first assume that $\tilde{u}_\e\to \tilde{u}\;\mbox{in}\;L^1(\Omega_1)$. Then we will argue that
\begin{equation} \liminf_{\e\to 0}\tilde{E}_{\e}^{\gamma}(\tilde{u}_\e)\geq
E_0^{\gamma}(\tilde{u}).\label{lsc}\end{equation}
Clearly we may assume
\beq
\liminf_{\e\to 0}\tilde{E}_{\e}^{\gamma}(\tilde{u}_\e)<\infty,
\label{upbd}
\eeq
and in particular that $\{\tilde{u}_\e\}\in BV(\Omega;\{\pm 1\})$
so we may write
\[
\tilde{u}_\e=\left\{\begin{matrix} 1&\mbox{in}\;A_\e\\-1&\mbox{in}\;\Omega_1\setminus A_\e,\\
\end{matrix}\right.\qquad
\tilde{u}=\left\{\begin{matrix} 1&\mbox{in}\;A\\-1&\mbox{in}\;\Omega_1\setminus A,\\
\end{matrix}\right.
\]
for sets of finite perimeter $A_\e$ and $A$, in light of the lower-semicontinuity of the total variation under
$L^1$-convergence.

Now if $\abs{\tilde{u}_{x_1}}(\Omega_1)>0$ then we find
\[
\liminf_{\e\to 0}\tilde{E}_{\e}^{\gamma}(\tilde{u}_\e)\geq \liminf_{\e\to 0}\frac{1}{\e}\int_{\partial^* A_\e}
\abs{n_1}\dH^1=\frac{1}{2} \liminf_{\e\to 0}\frac{1}{\e}\abs{(\tilde{u}_\e)_{x_1}}(\Omega_1)=\infty
\]
since $\frac{1}{2} \liminf_{\e\to 0}\abs{(\tilde{u}_\e)_{x_1}}(\Omega_1)\geq\frac{1}{2}\abs{\tilde{u}_{x_1}}(\Omega_1)>0$.
Hence we may assume $\abs{\tilde{u}_{x_1}}(\Omega_1)=0$. In turn, this implies 
  that, up to choosing the right Lebesgue representative, $\tilde{u}=\tilde{u}(y_1)$. Although this last point is standard, we write here the simple argument for the reader's convenience. Let $\tilde u_\delta:=\tilde u*\rho_\delta$, where $\rho_\delta$ denotes the standard mollifier. Note that $\tilde u_\delta$ is well defined on  $\Omega_1^\delta:=(\delta, 1-\delta)\times(\delta, 1-\delta)$ and 
  $(\tilde u_\delta)_{x_1}=\tilde u*(\rho_\delta)_{x_1}=0$ on $\Omega_1^\delta$. Thus, in particular, $\tilde u_\delta=\tilde u_\delta (y_1)$ on 
  $\Omega_1^\delta$. The conclusion follows by recalling that $\tilde u_\delta\to \tilde u$ a.e. in $\Omega_1$.
  
 Consequently, since $\chi_{A_\e}\to \chi_{A}$ in
$L^1(\Omega_1)$ we have
\[
\liminf_{\e\to 0}\int_{\partial^*\{\tilde{u}=1\}}\sqrt{\frac{1}{\e^2}n_1^2+n_2^2}\,\dH^1\geq
\liminf_{\e\to 0} P_{\Omega_1}(A_\e)\geq P_{\Omega_1}(A)=\frac{1}{2}\abs{\tilde{u}_{y_1}}(\Omega_1).\]
Turning to the lower-semi-continuity of the second integral in the definition of $\tilde{E}_{\e}^{\gamma}$
we note that \eqref{upbd} implies the uniform bound
\[
\int_{\Omega_1}\left(\frac{1}{\e^2}\tilde{v}_{x_1}^2+\tilde{v}_{y_1}^2\right)\,dx_1\,dy_1<C.
\]

In light of the Poincar\'e inequality for functions of zero mean, this leads to a uniform $H^1$ bound and yields
 the existence of a function $\hat{v}\in H^1(\Omega_1)$ with $\hat{v}=\hat{v}(y_1)$ such
that after passing to a subsequence (with subsequential notation suppressed), one has \beq
(\tilde{v}_\e)_{x_1}\to 0\quad\mbox{in}\;L^2(\Omega_1)\quad\mbox{and}\quad\tilde{v}_\e\rightharpoonup \hat{v}\quad
\mbox{in}\;H^1(\Omega_1).\label{wkh1}
\eeq
Hence, we have
\[
\liminf_{\e\to 0}\int_{\Omega_1}\left(\frac{1}{\e^2}(\tilde{v}_\e)_{x_1}^2+(\tilde{v}_\e)_{y_1}^2\right)\,dx_1\,dy_1\geq
\liminf_{\e\to 0}\int_{\Omega_1}(\tilde{v}_\e)_{y_1}^2\,dx_1\,dy_1\geq\int_{\Omega_1}\hat{v}_{y_1}^2\,dx_1\,dy_1=
\int_0^1\hat{v}_{y_1}^2\,dy_1.
\]
It remains to identify $\hat{v}$ with the solution $\tilde{v}$ to \eqref{1dp}. To this end we
consider the weak formulation of the PDE in \eqref{epn} subject to homogeneous Neumann boundary conditions, namely,
\[
\int_{\Omega_1}\frac{1}{\e^2}\phi_{x_1}(\tilde{v}_\e)_{x_1}+\phi_{y_1}(\tilde{v}_\e)_{y_1}\,dx_1\,dy_1=\int_{\Omega_1}\phi(\tilde{u}_\e-m)\,dx_1\,dy_1
\]
for any smooth function $\phi$ defined on $\overline{\Omega}_1.$ Making the choice of an arbitrary smooth $\phi$ depending only on $y_1$
we obtain
\[
\int_{\Omega_1}\phi_{y_1}(\tilde{v}_\e)_{y_1}\,dx_1\,dy_1=\int_{\Omega_1}\phi(\tilde{u}_\e-m)\,dx_1\,dy_1.
\]
We then pass to the limit using \eqref{wkh1} and the $L^1$ convergence of $\tilde{u}_\e$ to $\tilde{u}$ to find that $\hat{v}$ weakly solves the ODE
and boundary conditions of  \eqref{1dp}, hence $\hat{v}=\tilde{v}.$

The second requirement of $\Gamma$-convergence, namely the construction of a recovery sequence, say $w_\e\to \tilde{u}$ in $L^1(\Omega_1)$
such that $\tilde{E}_{\e}^{\gamma}(\tilde{w}_\e)\to E_0^{\gamma}(\tilde{u})$ is trivial as one simply takes $w_\e\equiv \tilde{u}$ for all
$\e$.
\end{proof}

Finally we note that $L^1(\Omega_1)$-compactness of energy bounded sequences follows immediately since the condition
$\sup_{\e}\tilde{E}_{\e}^{\gamma}(\tilde{u}_{\e})<\infty$ implies in particular a uniform BV bound on such a sequence $\tilde{u}_{\e}$.
In what follows we will use only the most basic property of $\Gamma$-convergence, namely that any limit of minimizers of $\tilde{E}_{\e}^{\gamma}$
is necessarily a minimizer of $E_0^{\gamma}.$

\section{Global minimizers of the $\Gamma$-limit}

Minimization of the one-dimensional energy $\Ez$ is a straight-forward exercise. For such an analysis, including a determination of local minimality of
$k$-jump critical points, one may look for example, in \cite[Proposition 3.3] {RW1}. 
For the sake of self-containment, however, and so as to express the results in our notation, we nonetheless present the explicit calculation in this
section over the parameter range $0\leq \gamma<\infty.$ We will
fix the mass constraint $m=0$ for convenience though similar calculations can be done for any value
of $m$ between $1$ and $-1$.

We recall that when posed in a general domain $\Omega$ in $n$-dimensional Euclidean space, a function $u\in BV(\Omega;\{\pm1\})$ is a {\em regular critical point} for the nonlocal isoperimetric problem
provided that $\partial \{u=1\}\cap \Omega$ is of class $C^2$ up to $\partial\Omega$  and
\begin{equation}
H(x)\,+\,4\gamma \,v(x)\,=const. \,\,\,\;\mbox{\rm for
all}\,\,\;x\in\partial \{u=1\}\cap\Omega,\label{2dcrit}
\end{equation}
along with an orthogonality condition along $\partial\Omega\cap \partial \{u=1\}$ (provided $\partial\Omega$ is smooth at such a point of intersection), where $H$ denotes the mean curvature of the free surface $\partial\{u=1\}\cap\Omega$, cf. e.g. \cite{CS} or \cite{CM}. For the one-dimensional problem $E_0^{\gamma}$, however, the criticality condition
reduces to simply
 \begin{equation}
v(x)\,=const. \,\,\,\;\mbox{\rm for
all}\,\,\;x\in\partial \{u=1\}\cap (0,1),\label{1dcrit}
\end{equation}
where
$v$ is the solution to the ODE
\beq
-v''=u\quad\mbox{on}\;0<y<1,\qquad v'(0)=0=v'(1).\label{1dode}
\eeq
We can naturally categorize the critical points in terms of the points in $(0,1)$ where $u$ jumps between $\pm 1$,
calling these points, say $\{y_j\}$, and then we note that \eqref{1dcrit} in particular implies that
$\int_{y_j}^{y_{j+1}}v'\,dy=0.$ From this condition and the mass constraint $\int_0^1 u\,dy=0$, one easily checks that, up to
multiplication by $-1$, there is a unique critical point having $k$ jumps, which we denote by $u_k$. Introducing
the notation
\beq
y_j=\frac{2j-1}{2k}\quad\mbox{for}\;j=1,2,\ldots,k,\label{yj}\eeq
 (which suppresses the dependence on $k$) we find that the critical point with $k$ jumps is given by
\beq
u_k(y):=\left\{\begin{matrix} 1&\mbox{for}\quad 0<y<y_1,\;y_2<y<y_3,\ldots,\;y_{k-1}<y<y_k\\
-1&\mbox{for}\quad y_1<y<y_2,\;y_3<y<y_4,\ldots,\;y_k<y<1\end{matrix}\right.\label{kodd}
\eeq
when $k$ is odd, and
\beq
u_k(y):=\left\{\begin{matrix} 1&\mbox{for}\quad 0<y<y_1,\;y_2<y<y_3,\ldots,\;y_k<y<1\\
-1&\mbox{for}\quad y_1<y<y_2,\;y_3<y<y_4,\ldots,\;y_{k-1}<y<y_k\end{matrix}\right.\label{keven}
\eeq
when $k$ is even. Then we denote by $v_k$ the corresponding solution to \eqref{1dode}. For example, in Figure \ref{vk} we depict
the solution in the case $k=5.$
\begin{figure}
\centerline{{\includegraphics[scale = 0.38, clip = true, trim = 5cm 1cm 4cm 2cm, angle = 90]{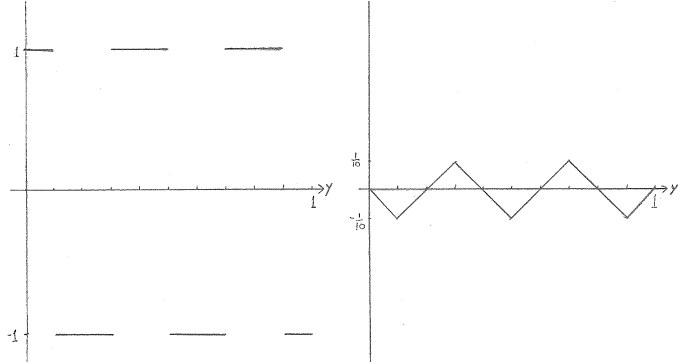}}} \caption{Graph of
the five jump critical point $u_5$ and the derivative of the corresponding solution $v_5$ to \eqref{1dode}.}
\label{vk}
\end{figure}
We then compute
\[
\int_0^1v_k'(y)^2\,dy=2k\int_0^\frac{1}{2k}y^2\,dy=\frac{1}{3}\bigg(\frac{1}{2k}\bigg)^2\]
so that
\beq
\Ez(u_k)=k+\frac{\gamma}{12k^2}.
\label{ukenergy}
\eeq
Fixing $\gamma>0$ and minimizing over $k$, we find that the minimizer of $\Ez$ will be given by $u_{k(\gamma)}$,
where $k(\gamma)$ is computable and will always be either the greatest integer less that $\bigg(\frac{\gamma}{6}\bigg)^{1/3}$
or the smallest integer bigger than $\bigg(\frac{\gamma}{6}\bigg)^{1/3}$. In particular, the number of interfaces of the minimizer
is a non-decreasing function of $\gamma$ that grows like $\gamma^{1/3}.$

 Alternatively, we observe that for any fixed integer $k$ the formula \eqref{ukenergy} is a linear function of $\gamma$
and the intersection point of any two of these lines corresponding to consecutive $k$ values moves monotonically to the right. This follows since
\[\Ez(u_{k-1})=\Ez(u_k)\quad\mbox{implies}\quad \gamma=\gamma_1(k):=\frac{12k^2(k-1)^2}{2k-1}\]
while  the condition
\[\Ez(u_{k})=\Ez(u_{k+1})\quad\mbox{implies}\quad \gamma=\gamma_2(k):=\frac{12k^2(k+1)^2}{2k+1}\]
and  one readily checks that $\gamma_1(k)<\gamma_2(k)$ for every positive integer $k$.
Thus,  
$u_k$ will be the minimizer for $\gamma$ lying in the interval $\gamma_1(k)\leq \gamma\leq \gamma_2(k)$. We therefore conclude:
\begin{prop}\label{global}
For a given positive integer $k$, the $k$ interface critical points $\pm u_k$ will be the global minimizers of $\Ez$ on the interval
\beq
\frac{12k^2(k-1)^2}{2k-1}<\gamma<\frac{12k^2(k+1)^2}{2k+1}\quad\mbox{for}\quad k=1,2,\ldots\label{kwins}
\eeq
\end{prop}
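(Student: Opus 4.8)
The plan is to produce a global minimizer of $\Ez$ by the direct method, to show it must be one of the $\pm u_k$, and thereby to reduce the problem to the elementary minimization of $k\mapsto k+\frac{\gamma}{12k^2}$ over positive integers; a short line-crossing computation then identifies the winning $k$ when $\gamma$ lies in the range \eqref{kwins}. For the first part, given $\gamma>0$ take a minimizing sequence $\tilde u_n$ for $\Ez$ among the admissible competitors. Then $\frac12\abs{(\tilde u_n)_{y_1}}(\Omega_1)\leq\Ez(\tilde u_n)$ is bounded, and since the competitors take values in $\{\pm1\}$ and obey $\abs{(\tilde u_n)_{x_1}}(\Omega_1)=0$, this is a uniform $BV$ bound; hence along a subsequence $\tilde u_n\to\tilde u$ in $L^1(\Omega_1)$ with $\tilde u\in BV(\Omega_1;\{\pm1\})$. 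Lower semicontinuity of the total variation carries $\abs{\tilde u_{x_1}}(\Omega_1)=0$ and the mass constraint to $\tilde u$; moreover, since each $\tilde u_n=\tilde u_n(y_1)$, the Neumann condition at $0$ gives $(\tilde v_n)_{y_1}(y)=-\int_0^y(\tilde u_n-m)$, so $(\tilde v_n)_{y_1}\to\tilde v_{y_1}$ uniformly and the nonlocal term converges. Therefore $\Ez(\tilde u)\leq\liminf_n\Ez(\tilde u_n)$, i.e.\ $\tilde u$ is a global minimizer. Being admissible, $\tilde u=\tilde u(y_1)\in BV((0,1);\{\pm1\})$; finiteness of the total variation forces finitely many jumps, say $k$, and $\fint\tilde u=0$ excludes $k=0$. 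As a minimizer, $\tilde u$ satisfies the criticality condition \eqref{1dcrit}, so by the classification of critical points recalled above one has $\tilde u=\pm u_k$. In view of \eqref{ukenergy}, it follows that $\min\Ez=\min_{k\geq1}\big(k+\frac{\gamma}{12k^2}\big)$ and that the global minimizers of $\Ez$ are exactly those $\pm u_k$ for which the integer $k$ realizes this minimum.

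\textbf{The integer comparison.} Write $f_j(\gamma):=j+\frac{\gamma}{12j^2}$, an affine function of $\gamma$. The discussion above \eqref{kwins} already records that $f_{j-1}$ and $f_j$ cross at $\gamma_1(j)=\frac{12j^2(j-1)^2}{2j-1}$, that $f_j$ and $f_{j+1}$ cross at $\gamma_2(j)=\frac{12j^2(j+1)^2}{2j+1}$, and that $\gamma_1(j)<\gamma_2(j)$. A one-line check gives $\gamma_1(j+1)=\gamma_2(j)$, so $\gamma_1(j)<\gamma_2(j)=\gamma_1(j+1)$ shows that $j\mapsto\gamma_1(j)$, hence also $j\mapsto\gamma_2(j)$, is strictly increasing. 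Since the slope $\frac{1}{12j^2}$ decreases in $j$ while the value at $\gamma=0$ increases, one has $f_{j+1}(\gamma)<f_j(\gamma)$ if and only if $\gamma>\gamma_2(j)$. Now fix $\gamma\in(\gamma_1(k),\gamma_2(k))$. For each $i$ with $1\leq i\leq k-1$ we have $\gamma>\gamma_1(k)=\gamma_2(k-1)\geq\gamma_2(i)$, so $f_{i+1}(\gamma)<f_i(\gamma)$; chaining these from $i=k-1$ downward gives $f_k(\gamma)<f_j(\gamma)$ for every $j\leq k-1$. Symmetrically, for each $i\geq k$ we have $\gamma<\gamma_2(k)\leq\gamma_2(i)$, so $f_i(\gamma)<f_{i+1}(\gamma)$, and chaining upward gives $f_k(\gamma)<f_j(\gamma)$ for every $j\geq k+1$. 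Thus $k$ is the unique minimizer of $j\mapsto f_j(\gamma)$ over the positive integers, and by the reduction above the only global minimizers of $\Ez$ are $\pm u_k$, which is the assertion.

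\textbf{Main obstacle.} There is no essential difficulty here: the compactness is immediate from the $BV$ bound, and the classification of the one-dimensional critical points is already available. The two points that need a little care are the (standard) verification that a minimizer of $\Ez$ satisfies \eqref{1dcrit}, and the bookkeeping in the line-crossing argument — keeping every inequality strict and checking that each step of the two chains invokes only a crossing value $\gamma_2(i)$ with $i$ in the permitted range.
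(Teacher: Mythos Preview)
Your proof is correct and follows essentially the same approach as the paper: reduce to minimizing $k\mapsto k+\frac{\gamma}{12k^2}$ over positive integers via the classification of critical points and the energy formula \eqref{ukenergy}, then carry out the line-crossing computation with $\gamma_1(k)$ and $\gamma_2(k)$. The paper leaves existence and the reduction to critical points as part of a ``straight-forward exercise'' and sketches the line comparison only between consecutive $k$'s; you have simply filled in these details (direct method for existence, the observation $\gamma_1(j+1)=\gamma_2(j)$ to handle all non-adjacent comparisons), but the underlying argument is the same.
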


\section{Two-dimensional stability of the one-dimensional critical points}

Here we wish to determine the range of stability of the critical points $u_k$ defined in \eqref{kodd}-\eqref{keven}
with respect to the two-dimensional energy $\E.$ We refer the reader to \cite{CM} for an earlier derivation of stability of
lamellar patterns through an entirely different approach. By stability, we mean positivity of the second variation. We recall
that in a general domain $\Omega\subset\R^n$, $n$ arbitrary, the second variation of the nonlocal isoperimetric energy $E_{\Omega}^{\gamma}$
about a critical point $u\in BV(\Omega;\{\pm 1\})$
with $\Gamma:=\partial\{u=1\}\cap \Omega$ takes the form
\begin{multline}
 \delta^2 E_{\Omega}^{\gamma}(u;f)= \int_{\Gamma}\big(\abs{\nabla_{\Gamma}f}^{2}-\norm{B_{\Gamma}}^2 f^{2}\big)\,\dH^{n-1}\,
 -\int_{\overline \Gamma\cap\partial\Omega}B_{\partial\Omega}(n_{\Gamma},n_{\Gamma})f^2\, \dH^{n-2}\\
 +  8\gamma \int_{\Gamma}
\int_{\Gamma}G({\bf {x}},{\bf{\tilde{x}}})\,f({\bf{x}})\,f({\bf{\tilde{x}}})\,\dH^{n-1}({\bf{x}})\,\dH^{n-1}({\bf{\tilde{x}}})
+\; 4\gamma \int_{\Gamma}\nabla
v\cdot n_{\Gamma}\,f^{2}\,\dH^{n-1}.\label{gensecondvar}
\end{multline}
Here eligible functions $f$ are those lying in $H^1(\Gamma)$ and satisfying $\int_{\Gamma} f\,\dH^{n-1}=0$. The quantities $B_{\Gamma}$
 and $B_{\partial\Omega}$ stand for the  second fundamental form of $\Gamma$ and $\partial\Omega$, respectively,  $\norm{B_{\Gamma}}^2$
denotes the norm squared of the second fundamental form--or equivalently, the sum of the squares of the $n-1$ principal curvatures of $\Gamma$,
and $G:\Omega\times\Omega\to\R$ denotes the Green's function for $-\Delta$ in $\Omega$ subject to homogeneous
Neumann boundary conditions. The function $v$ in the last term above denotes the solution to the Poisson equation \eqref{Poisson}
and $n_{\Gamma}$ denotes the outer unit normal with respect to $\{u=1\}$. We refer
to \cite{CS} or \cite{CM} for details.

Let us now apply \eqref{gensecondvar} to the setting of the previous sections by taking $\Omega=\Oe=(0,\e)\times (0,1)$ and $u=u_k$ given by \eqref{kodd}-\eqref{keven} for
any positive integer $k$.
Denoting by $\Gamma$ the union $\cup_{j=1}^k \Gamma_j$ of line segments $\Gamma_j:= (0,\e)\times\{y_j\}$
comprising the jump set of $u_k$, we evaluate $\delta^2 \E(u_k;f)$ for an arbitrary function $f\in H^1(\Gamma)$ satisfying
\beq
\int_{\Gamma}f\,\dH^1=0\label{zeroint}
\eeq
to find
\begin{eqnarray}
&&\delta^2 \E(u_k;f)=\nonumber\\
&&\sum_{j=1}^k\int_0^\e f_j'(x)^2\,dx+8\gamma\int_{\Gamma}\int_{\Gamma} G\,f\,f\,\dH^1\,\dH^1+
4\gamma \int_{\Gamma}\bigg(\nabla v_k\cdot n_{\Gamma}\bigg)\,f^2\,\dH^1.\label{sv}
\end{eqnarray}
Here we have used the fact that ${B_{\Gamma}}\equiv 0$ and $B_{\partial\Omega}=0$ in a neighborhood of  $\overline \Gamma\cap\partial\Omega$, 
 and we have introduced $f_j$ for the restriction $f\rtangle\Gamma_j$. It will also be convenient to introduce
the notation $f_j=a_j+g_j$ where $a_j:=\fint f_j$ so that by \eqref{zeroint} we have
\beq
\sum_{j=1}^k a_j=0\quad\mbox{and for each}\;j\;\mbox{we have}\quad
\int_0^\e g_j(x)\,dx=0.\label{zero}
\eeq
We will analyze each term of \eqref{sv} separately. Starting with the first one, we note that by the Poincar\'e inequality,
one has
\beq
\int_0^\e f_j'(x)^2\,dx=\int_0^\e g_j'(x)^2\,dx\geq \bigg(\frac{\pi}{\e}\bigg)^2\int_0^\e g_j(x)^2\,dx.\label{Poincare}
\eeq
Next, to analyze the term involving the Green's function we need a bit more notation. Let $\bar{f}:\Gamma\to\R$ denote the
function given by $\bar{f}\rtangle\Gamma_j=a_j$ and let $g:\Gamma\to\R$ be the function given by $g\rtangle\Gamma_j=g_j$.
Also, we introduce the measures $\mu_{\bar{f}}$ and $\mu_g$ via the formulas
\[
\mu_{\bar{f}}=\sum_{j=1}^k a_j\delta_{\Gamma_j},\qquad \mu_g=\sum_{j=1}^k g_j\delta_{\Gamma_j}
\]
and let $v_{\bar{f}}$ and $v_g$ denote the weak $H^1$ solutions to the Poisson equations
\[
-\Delta v_{\bar{f}}= \mu_{\bar{f}},\qquad -\Delta v_g=\mu_g\quad\mbox{in}\;\Oe
\]
subject to homogeneous Neumann boundary conditions and zero mean.

Note that $v_{\bar{f}}$ will depend only on $y$ so that
\begin{eqnarray}
&&\int_{\Gamma}\int_{\Gamma} G\,\bar{f}\,g\,\dH^1\,\dH^1=\int_{\Oe}\int_{\Oe}G\,d\mu_{\bar{f}}\,d\mu_g=\int_{\Oe}v_{\bar{f}}\,d\mu_g\nonumber\\
&&=\sum_{j=1}^k\int_0^\e\bigg(v_{\bar{f}}\rtangle \Gamma_j\bigg)\,g_j(x)\,dx=
\sum_{j=1}^k\bigg(v_{\bar{f}}\rtangle \Gamma_j\bigg)\,\int_0^\e g_j(x)\,dx=0\label{nocross}
\end{eqnarray}
by \eqref{zero}. Therefore, we find that
\begin{eqnarray*}
&&\int_{\Gamma}\int_{\Gamma} G\,f\,f\,\dH^1\,\dH^1=\int_{\Gamma}\int_{\Gamma} G\,(\bar{f}+g)(\bar{f}+g)\,\dH^1\,\dH^1=\int_{\Gamma}\int_{\Gamma} G\,\bar{f}\,\bar{f}\,\dH^1\,\dH^1+\int_{\Gamma}\int_{\Gamma} G\,g\,g\,\dH^1\,\dH^1\\
&&=\int_{\Oe}\int_{\Oe}G\,d\mu_{\bar{f}}\,d\mu_{\bar{f}}+\int_{\Oe}\int_{\Oe}G\,d\mu_g\,d\mu_g=
\int_{\Oe}\abs{\nabla v_{\bar{f}}}^2\,dx\,dy+\int_{\Oe}\abs{\nabla v_{g}}^2\,dx\,dy.
\end{eqnarray*}
Now since $v_{\bar{f}}$ satisfies $-v_{\bar{f}}''=a_1\delta_{\{y=y_1\}}+\ldots+a_k\delta_{\{y=y_k\}}$ with
$v_{\bar{f}}'(0)=0=v_{\bar{f}}'(1)$, we can integrate and use \eqref{zero} to obtain
\[
v_{\bar{f}}'(y)=\left\{\begin{matrix} 0&\mbox{for}\;0<y<y_1\\
 -a_1&\mbox{for}\;y_1<y<y_2\\
  -(a_1+a_2)&\mbox{for}\;y_2<y<y_3\\
\cdot&\\
\cdot&\\
\cdot&\\
-(a_1+a_2+\ldots+a_{k-1})&\mbox{for}\;y_{k-1}<y<y_k\\
0&\mbox{for}\;y_k<y<1.\end{matrix}\right.
\]
This allows us to compute the value of $\int_{\Oe}\abs{\nabla v_{\bar{f}}}^2=\e\int_0^1(v_{\bar{f}}')^2(y)\,dy$
and we find
\beq
\int_{\Gamma}\int_{\Gamma} G\,f\,f=\frac{\e}{k}\left[a_1^2+(a_1+a_2)^2+\ldots+(a_1+a_2+\ldots+a_{k-1})^2\right]
+\int_{\Oe}\abs{\nabla v_{g}}^2.\label{Green}
\eeq
It remains to compute the last integral in \eqref{sv}.  In view of \eqref{1dode}, \eqref{kodd} and \eqref{keven} we have
the alternating pattern
\[
v_k'(y_1)=-\frac{1}{2k},\;v_k'(y_2)=\frac{1}{2k},\;v_k'(y_3)=-\frac{1}{2k},\ldots
\]
(cf. Figure \ref{vk}).
Recalling that $\nabla v_k\cdot n$ denotes the outer normal derivative
with respect to the set $\{u_k=1\}$, it then follows from \eqref{zero} that
\begin{eqnarray}
&&
\int_{\Gamma}\bigg(\nabla v_k\cdot n\bigg)f^2\dH^1=\sum_{j=1}^k\int_{\Gamma_j}\bigg(\nabla v_k\cdot n\bigg)\bigg(a_j+g_j(x)\bigg)^2\,dx\nonumber\\
&&=v_k'(y_1)\int_{\Gamma_1}\bigg(a_1+g_1(x)\bigg)^2\,dx-v_k'(y_2)\int_{\Gamma_2}\bigg(a_2+g_2(x)\bigg)^2\,dx+\ldots-(-1)^k
v_k'(y_k)\int_{\Gamma_k}\bigg(a_k+g_k(x)\bigg)^2\,dx\nonumber\\
&&=\;-\frac{\e}{2k}\sum_{j=1}^k a_j^2\;-\frac{1}{2k}\sum_{j=1}^k\int_0^\e g_j(x)^2\,dx.\label{flux}
\end{eqnarray}
Combining \eqref{Poincare}, \eqref{Green} and \eqref{flux} we conclude that
\begin{eqnarray}
&&
\delta^2 \E(u_k;f)\geq
\left(\bigg(\frac{\pi}{\e}\bigg)^2-\frac{2\gamma}{k}\right)\sum_{j=1}^k\int_0^\e g_j(x)^2\,dx+
8\gamma \int_{\Oe}\abs{\nabla v_{g}}^2\nonumber\\
&& +\bigg(\frac{2\gamma\e}{k}\bigg)\bigg(4\left[a_1^2+(a_1+a_2)^2+\ldots+(a_1+a_2+\ldots+a_{k-1})^2\right]-
\left[a_1^2+a_2^2+\ldots+a_k^2\right]\bigg).\nonumber\\
&&\label{2sv}
\end{eqnarray}
We now claim that the quadratic form arising in the last line of \eqref{2sv} is positive definite. To see this, it is
convenient to change variables in this expression by introducing
\[
\alpha_1=a_1,\;\alpha_2=a_1+a_2,\;\ldots,\;\alpha_k=a_1+a_2+\ldots+a_k.\]
Then rewriting the expression in terms of the $\alpha_j's$ and using \eqref{zero} we find after a little algebra that
\begin{eqnarray*}
&&
4\left[a_1^2+(a_1+a_2)^2+\ldots+(a_1+a_2+\ldots+a_{k-1})^2\right]-
\left[a_1^2+a_2^2+\ldots+a_k^2\right]\\
&&=4\left[\alpha_1^2+\alpha_2^2+\ldots+\alpha_{k-1}^2\right]-
\left[\alpha_1^2+(\alpha_2-\alpha_1)^2+\ldots+(\alpha_{k-1}-\alpha_{k-2})^2+\alpha_{k-1}^2\right]\\
&&
=2\bigg(\alpha_1^2+\alpha_2^2+\ldots+\alpha_{k-1}^2+\alpha_1\alpha_2+\alpha_2\alpha_3+\ldots+\alpha_{k-2}\alpha_{k-1}\bigg)
\end{eqnarray*}
\[=(\alpha_1,\alpha_2,\ldots,\alpha_{k-1})\left(\begin{array}{cccc}2&1&&0\\
1&\ddots&\ddots&\\
&\ddots&\ddots&1\\0 & & 1 & 2\\\end{array}\right)\left(\begin{matrix}\alpha_1\\ \alpha_2\\
\cdot \\ \cdot \\ \cdot\\ \alpha_{k-1}\end{matrix}\right)\]\[\geq
\bigg\{2+2\cos{\bigg(\frac{(k-1)\pi}{k}\bigg)}\bigg\}\left[\alpha_1^2+\alpha_2^2+\ldots+\alpha_{k-1}^2\right]
\]
since the eigenvalues of this matrix are given by
\[
\lambda_j=2+2\cos{\bigg(\frac{j\pi}{k}\bigg)}\quad\mbox{for}\;j=1,2,\ldots,k-1
\]
(see e.g. \cite{LL}).

In particular, returning to \eqref{2sv} we have established
\begin{prop}\label{Stability}
For any positive integer $k$, the function $u_k$ given by \eqref{kodd}-\eqref{keven}
is a stable critical point of the functional $\E$ provided
\beq
\e<\pi\sqrt{\frac{k}{2\gamma}}.\label{stable}
\eeq
\end{prop}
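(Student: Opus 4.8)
\section*{Proof of Proposition \ref{Stability} (proposal)}

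The plan is to assemble the pieces already in place. First I would note that the general second variation formula \eqref{gensecondvar} does apply in our cornered domain $\Oe$: each interface $\Gamma_j=(0,\e)\times\{y_j\}$ is a flat segment, it meets the flat lateral boundary $\{x=0\}\cup\{x=\e\}$ orthogonally, and it sits at a fixed positive distance from the four corners of $\Oe$; hence $\|B_{\Gamma}\|^2$ and $B_{\partial\Oe}(n_\Gamma,n_\Gamma)$ vanish identically in a neighborhood of $\overline\Gamma\cap\partial\Oe$ and no corner term appears, so \eqref{gensecondvar} reduces to \eqref{sv}. (Equivalently, one may reflect $\Oe$ evenly across its sides to pass to a flat torus, on which $u_k$ becomes a smooth periodic critical point and the classical formula applies verbatim.)

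Next, for an admissible $f\in H^1(\Gamma)$ with $\int_\Gamma f\,\dH^1=0$ I would use the splitting $f_j=a_j+g_j$ of \eqref{zero} and bound the three terms of \eqref{sv} exactly as in \eqref{Poincare}--\eqref{flux}: the Dirichlet term is $\geq(\pi/\e)^2\sum_j\int_0^\e g_j^2$ by the Poincar\'e inequality on $(0,\e)$; the nonlocal term splits as $\int_{\Oe}|\nabla v_{\bar f}|^2+\int_{\Oe}|\nabla v_g|^2$ because the $\bar f$--$g$ cross term vanishes (the potential $v_{\bar f}$ is constant along each $\Gamma_j$ while each $g_j$ has zero average), and $\int_{\Oe}|\nabla v_{\bar f}|^2$ evaluates explicitly to $\tfrac{\e}{k}[a_1^2+(a_1+a_2)^2+\cdots+(a_1+\cdots+a_{k-1})^2]$; and the flux term equals $-\tfrac{\e}{2k}\sum_j a_j^2-\tfrac1{2k}\sum_j\int_0^\e g_j^2$ from the alternating values $v_k'(y_j)=\pm\tfrac1{2k}$. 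Collecting these gives the lower bound \eqref{2sv}.

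Finally I would confirm that the quadratic form in the last line of \eqref{2sv} is positive definite: in the partial-sum variables $\alpha_j=a_1+\cdots+a_j$ (with $\alpha_k=0$ by \eqref{zero}) it becomes $(\alpha_1,\dots,\alpha_{k-1})$ applied to the symmetric tridiagonal matrix with diagonal entries $2$ and off-diagonal entries $1$, whose eigenvalues $2+2\cos(j\pi/k)$, $j=1,\dots,k-1$, are all strictly positive. Therefore, as soon as $\e<\pi\sqrt{k/(2\gamma)}$ the coefficient $(\pi/\e)^2-2\gamma/k$ is strictly positive, while $8\gamma\int_{\Oe}|\nabla v_g|^2\geq0$ and the $\alpha$-term is nonnegative; hence $\delta^2\E(u_k;f)\geq 0$, and it is strictly positive unless every $g_j\equiv0$ and every $a_j=0$, i.e. unless $f\equiv0$. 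That is exactly the asserted stability. The only step requiring any real care is the first one---legitimizing the use of \eqref{gensecondvar} in a domain with corners---and this is handled, as indicated, by the fact that $\Gamma$ stays uniformly away from the corners; everything else is the bookkeeping already carried out above.
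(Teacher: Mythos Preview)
Your proposal is correct and follows essentially the same approach as the paper: the paper's proof is precisely the derivation in Section~4 leading up to the proposition, and you have faithfully reproduced each of its steps---the reduction to \eqref{sv} via vanishing of the curvature terms, the splitting $f_j=a_j+g_j$, the Poincar\'e bound \eqref{Poincare}, the decoupling \eqref{nocross} and explicit evaluation \eqref{Green} of the nonlocal term, the flux computation \eqref{flux}, and the positive-definiteness of the tridiagonal form in the $\alpha_j$ variables. Your added remark on strict positivity (forcing $g_j\equiv0$ and $a_j=0$, hence $f\equiv0$) is a welcome clarification that the paper leaves implicit.
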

We note that the stability of the lamellar configurations implies that they are in fact $L^1$ 
local minimizers, as made precise by Theorem \ref{local} to follow.
\section{Two-dimensional minimality of one-dimensional\\ minimizers }

In this section we prove our main result. A crucial tool will be the the recent work in \cite{AFM} and \cite{Julin} on stability implying local minimality
for the nonlocal isoperimetric problem. Here we present an adaptation applicable to the present setting of cylindrical domains
with Neumann boundary conditions. 
\begin{thm}\label{local}Let $\Omega\subset\R^n$ be any bounded smooth domain with $n$ arbitrary and for any positive integer
$\ell$ and any positive $\e$ let $\Omega^{\ell}_{\e}\subset\R^{\ell+n}$ be defined as $(0,\e)^{\ell}\times\Omega$. Then for any $\gamma\geq 0$, given any {\em regular critical point}
$u\in L^1(\Omega^{\ell}_{\e})$ of $E^{\gamma}_{\Omega^{\ell}_{\e}}$ such that $\overline{\partial\{u=1\}\cap\Omega^{\ell}_{\e}}$ 
only meets the regular part of $\partial\Omega^{\ell}_\e$ and  $\delta^2 E^{\gamma}_{\Omega^{\ell}_\e}(u;f)>0$ for all nontrivial
$f\in H^1(\partial\{u=1\}\cap\Omega^{\ell}_\e)$ satisfying $\int f=0$ , there exist
$\delta$ and $C>0$ such that
\[
E^{\gamma}_{\Omega^{\ell}_{\e}}(w)\geq E^{\gamma}_{\Omega^{\ell}_{\e}}(u)+C\norm{u-w}_{L^1(\Omega^{\ell}_\e)}^2\quad
\mbox{whenever}\quad \norm{u-w}_{L^1(\Omega^{\ell}_\e)}<\delta\text{ and }\int_\Omega w\, dx=\int_\Omega u\, dx\,. \]
\end{thm}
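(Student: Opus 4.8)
The plan is to adapt the regularization‑and‑contradiction scheme of \cite{AFM} and \cite{Julin}, the only genuinely new features being the Neumann boundary conditions and the corner singularities of $\Omega^{\ell}_{\e}=(0,\e)^{\ell}\times\Omega$; both are handled by a \emph{local reflection} across the regular part of $\partial\Omega^{\ell}_{\e}$. Write $\Gamma:=\partial\{u=1\}\cap\Omega^{\ell}_{\e}$. The first step is to upgrade the hypothesis $\delta^2 E^{\gamma}_{\Omega^{\ell}_{\e}}(u;f)>0$ to the coercive estimate $\delta^2 E^{\gamma}_{\Omega^{\ell}_{\e}}(u;f)\geq c_0\|f\|_{H^1(\Gamma)}^2$ for all $f\in H^1(\Gamma)$ with $\int_\Gamma f=0$. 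This is a standard compactness argument applied to \eqref{gensecondvar}: the form equals $\int_\Gamma|\nabla_\Gamma f|^2$ plus a term that is nonnegative (the double Green's‑function term, which equals $8\gamma\int_{\Omega^{\ell}_{\e}}|\nabla v_f|^2$ for the potential $v_f$ generated by $f$ on $\Gamma$) plus terms that are continuous under weak $H^1(\Gamma)$ convergence (the curvature terms and the flux term, the latter bounded because $\overline\Gamma$ meets only the regular part of $\partial\Omega^{\ell}_{\e}$, so $v$ is smooth up to $\Gamma$ by elliptic regularity); a Gårding‑type inequality together with positivity on the constraint space then forces coercivity. I would also record, by the same compactness, the stability of this estimate: there are $\eta,c_0>0$ with $\delta^2 E^{\gamma}_{\Omega^{\ell}_{\e}}(\Gamma';\phi)\geq\tfrac{c_0}{2}\|\phi\|_{H^1(\Gamma')}^2$ whenever $\Gamma'$ is a normal graph over $\Gamma$ of $C^{1,\alpha}$‑height $<\eta$ and $\int_{\Gamma'}\phi=0$, since the coefficients of \eqref{gensecondvar} depend continuously on the surface.

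Next, suppose the conclusion fails, so there are competitors $w_j$ with $\int_\Omega w_j=\int_\Omega u$, $\|w_j-u\|_{L^1(\Omega^{\ell}_{\e})}\to0$ and $E^{\gamma}_{\Omega^{\ell}_{\e}}(w_j)<E^{\gamma}_{\Omega^{\ell}_{\e}}(u)+\tfrac1j\|w_j-u\|_{L^1(\Omega^{\ell}_{\e})}^2$. Following \cite{AFM}, replace the volume constraint by the penalization $\Lambda\,\big|\,|\{w=1\}|-|\{u=1\}|\,\big|$ with $\Lambda$ fixed and large; since $w\mapsto\int|\nabla v|^2$ is Lipschitz in $L^1$, each $\{w_j=1\}$ is then a $(\Lambda,r_0)$‑minimizer of perimeter in $\Omega^{\ell}_{\e}$ for a uniform $r_0$. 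The density and compactness estimates for such minimizers give that $\partial\{w_j=1\}\to\overline\Gamma$ in the Hausdorff sense, so for $j$ large $\overline{\partial\{w_j=1\}}$ stays in a fixed small neighborhood of $\overline\Gamma$ and hence away from the edges and corners of $\partial\Omega^{\ell}_{\e}$. Near the \emph{regular} part of $\partial\Omega^{\ell}_{\e}$ — the interiors of the flat faces $\{x_i\in\{0,\e\}\}$, where one reflects evenly, and the smooth piece $(0,\e)^{\ell}\times\partial\Omega$, where one straightens the boundary by a local diffeomorphism and reflects, using that at a critical point $\Gamma$ meets $\partial\Omega^{\ell}_{\e}$ orthogonally — the reflected sets are $(\Lambda,r_0)$‑minimizers of perimeter with no boundary in the relevant ball, so the interior regularity theory of \cite{AFM} and \cite{Julin} applies. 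Consequently, for $j$ large $\partial\{w_j=1\}$ is the normal graph over $\Gamma$ of a function $\psi_j$ with $\|\psi_j\|_{C^{1,\alpha}(\Gamma)}\to0$, meeting $\partial\Omega^{\ell}_{\e}$ orthogonally along $\partial\Gamma$.

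Now connect $u$ to $w_j$ through the surfaces $\Gamma_t$ of normal height $t\psi_j$, $t\in[0,1]$. The volume constraint forces $\fint_\Gamma\psi_j=O(\|\psi_j\|_{L^2(\Gamma)}^2)$, negligible against $\|\psi_j\|_{H^1(\Gamma)}^2$, so after subtracting this mean we may treat $\psi_j$ as admissible for the second variation. Since the first variation of $E^{\gamma}_{\Omega^{\ell}_{\e}}$ vanishes at $\Gamma$ by criticality,
\[
E^{\gamma}_{\Omega^{\ell}_{\e}}(w_j)-E^{\gamma}_{\Omega^{\ell}_{\e}}(u)=\int_0^1(1-t)\,\frac{d^2}{dt^2}E^{\gamma}_{\Omega^{\ell}_{\e}}(\Gamma_t)\,dt,
\]
and the computation of \cite{AFM} and \cite{Julin} identifies $\tfrac{d^2}{dt^2}E^{\gamma}_{\Omega^{\ell}_{\e}}(\Gamma_t)$ with the second variation at $\Gamma_t$ in the direction (essentially) $\psi_j$, up to an error $o(1)\|\psi_j\|_{H^1(\Gamma)}^2$. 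By the stable coercivity of the first step this integral is $\geq\tfrac{c_0}{4}\|\psi_j\|_{H^1(\Gamma)}^2$ for $j$ large. Finally $\|w_j-u\|_{L^1(\Omega^{\ell}_{\e})}\leq C\int_\Gamma|\psi_j|\,\dH^{n-1}\leq C'\|\psi_j\|_{L^2(\Gamma)}$, so $E^{\gamma}_{\Omega^{\ell}_{\e}}(w_j)-E^{\gamma}_{\Omega^{\ell}_{\e}}(u)\geq c\,\|w_j-u\|_{L^1(\Omega^{\ell}_{\e})}^2$, contradicting the choice of $w_j$ once $j$ is large. Since the very same chain of inequalities applied directly to any $w$ close to $u$ in $L^1$ with the right volume yields the asserted quadratic bound, this completes the argument.

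I expect the main obstacle to be the boundary regularity in the penultimate paragraph: showing that the free boundaries $\partial\{w_j=1\}$ are $C^{1,\alpha}$‑close to $\Gamma$ \emph{up to} $\partial\Omega^{\ell}_{\e}$, and in particular never touch the non‑smooth part of the boundary. The hypothesis that $\overline\Gamma$ meets only the regular part of $\partial\Omega^{\ell}_{\e}$, together with the density estimates that trap $\partial\{w_j=1\}$ in a small neighborhood of $\overline\Gamma$, is exactly what makes the even‑reflection (respectively boundary‑straightening) step legitimate and lets one invoke the smooth/periodic regularity and Taylor‑expansion machinery of \cite{AFM} and \cite{Julin} essentially verbatim. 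For the application to the lamellar patterns $u_k$ the situation is of course much simpler: $\Gamma$ is a union of flat segments ($B_\Gamma\equiv0$) lying strictly inside the flat faces, so only even reflections are needed and the coercivity of the second variation is already quantified by Proposition \ref{Stability}.
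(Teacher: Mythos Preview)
Your proposal is correct and follows essentially the same route as the paper: both invoke the \cite{AFM}/\cite{Julin} machinery, with the key adaptation being the even reflection across the flat faces of $(0,\e)^\ell$ (the paper packages this as Corollary~\ref{cor:almgren}) so that the quasiminimizer regularity theory can be applied up to the boundary, away from the singular part of $\partial\Omega^\ell_\e$. One small slip: the competitors $w_j$ from the contradiction are not themselves $(\Lambda,r_0)$-minimizers; as in \cite{AFM} you must first replace them by minimizers of a penalized functional (constraining the $L^1$-distance to $u$) with no larger energy, and it is \emph{those} sets to which the density estimates and regularity apply---but you clearly have the right scheme in mind.
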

As mentioned before, the proof of the theorem is essentially contained in \cite{AFM} and \cite{Julin}, but a few remarks are in order. Let us recall the classical notion of quasiminimizers  of the standard perimeter.
We say that a set $E\subset\Omega$ of (relative) locally finite perimeter is a {\em strong quasiminimizer in $\Omega$} with constants $\Lambda>0$ and $r>0$ if for every $F\subset \Omega$ of (relative) locally finite perimeter with $F\Delta E\subset B_r(x_0)$ for some ball $B_r(x_0)$ we have that  
$$
P_{\Omega\cap B_r(x_0)}(E)\leq P_{\Omega\cap B_r(x_0)}(F)+\Lambda|E\Delta F|\,.
$$
%If the above holds whenever $F\Delta E\subset B_r(x_0)$, with $B_r(x_0)$ {\em any} ball  of radius $r$ (not necessarily contained in $\Omega$), then we say that $E$ is {\em  strong %quasiminimizer in $\Omega$} with constants $\Lambda>0$ and $r>0$. 
We recall that in particular, local minimizers of the nonlocal isoperimetric problem are quasiminimizers, cf. e.g.
\cite[Theorem 2.8]{AFM}.
The following theorem follows from the well-established regularity theory for quasiminimizers of the perimeter (see for instance \cite[Theorem 3.3]{Julin}).
\begin{thm}\label{th:almgrenbis}
Assume that $\Omega$ is smooth and let $E_h\subset\Omega$ be  a sequence of  strong quasiminimizers  in $\Omega$ with uniform constants $\Lambda>0$ and $r>0$ and such that
$$
\chi_{E_h}\to \chi_E\quad\text{a.e. in  $\Omega$ as $h\to\infty$}
$$
for some set $E$   of class $C^{1, \alpha}$, $\alpha\in (0, \frac12)$, such that 
either  $\overline {\partial E\cap\Omega}\cap \partial \Omega=\emptyset$ or  
$\overline {\partial E\cap\Omega}$ meets $\partial \Omega$ orthogonally.
Then,  for $h$ large enough $\partial E_h$ is of class $C^{1, \alpha}$  and 
\begin{equation}\label{nonlocal}
\partial E_h\to \partial E\qquad\text{in $C^{1,\alpha}$.}
\end{equation}
\end{thm}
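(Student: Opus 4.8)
The plan is to deduce Theorem \ref{th:almgrenbis} from the standard $\varepsilon$-regularity theory for quasiminimizers of the perimeter, following the treatment in \cite{AFM} and \cite{Julin}, with the only new wrinkle being the bookkeeping near the orthogonal-intersection part of $\partial\Omega$. The strategy rests on three ingredients: (i) uniform density estimates for the $E_h$, which are a consequence of the strong quasiminimality with uniform constants $\Lambda, r$; (ii) the $\varepsilon$-regularity theorem, which says that if the rescaled perimeter deficit (the excess) of a quasiminimizer in some ball $B_\rho(x)$ is below a universal threshold, then $\partial E_h \cap B_{\rho/2}(x)$ is a $C^{1,\alpha}$ graph with scale-invariant norm controlled by the excess; and (iii) compactness, namely that $\chi_{E_h}\to\chi_E$ in $L^1_{\mathrm{loc}}$ together with the density estimates upgrades to Hausdorff convergence of the boundaries $\partial E_h\to\partial E$, and in particular forces the excess of $E_h$ at small scales to be small once $h$ is large, because $\partial E$ is $C^{1,\alpha}$ (hence has vanishing excess at every point) and the excess is, up to the $\Lambda$-perturbation, lower semicontinuous under $L^1$ convergence.

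Concretely, I would proceed as follows. First, fix a point $x_0\in\overline{\partial E\cap\Omega}$. There are two cases. If $x_0\in\Omega$ (the interior case), I choose $\rho$ small enough that $B_\rho(x_0)\Subset\Omega$ and that, because $\partial E$ is a $C^{1,\alpha}$ hypersurface, the spherical excess of $E$ in $B_\rho(x_0)$ is below half the $\varepsilon$-regularity threshold; by the $L^1$ convergence and the $\Lambda$-quasiminimality, the excess of $E_h$ in $B_\rho(x_0)$ is then below the threshold for all large $h$, so $\partial E_h\cap B_{\rho/2}(x_0)$ is a $C^{1,\alpha}$ graph over the tangent plane of $\partial E$ at $x_0$, with uniformly small $C^{1,\alpha}$ norm; moreover the graphs converge in $C^{1,\alpha'}$ for any $\alpha'<\alpha$, and in fact in $C^{1,\alpha}$ by the a priori estimate plus Arzel\`a--Ascoli combined with $L^1$ convergence to pin down the limit. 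If $x_0\in\partial\Omega$ (the boundary case, which only occurs when $\overline{\partial E\cap\Omega}$ meets $\partial\Omega$ orthogonally), I flatten $\partial\Omega$ near $x_0$ by a diffeomorphism and reflect: the reflected sets $\widehat{E_h}$ across the flattened $\partial\Omega$ are again strong quasiminimizers with uniform constants (the diffeomorphism only perturbs $\Lambda$ and $r$ by a controlled amount, and the orthogonality guarantees that $\partial\widehat E$ is a $C^{1,\alpha}$ hypersurface through the flattened boundary with no corner), so the interior argument applies to $\widehat{E_h}$ and gives $C^{1,\alpha}$ convergence $\partial\widehat{E_h}\to\partial\widehat E$, which restricts back to the desired statement for $E_h$.

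Having the estimate locally near every point of $\overline{\partial E\cap\Omega}$, I would then patch: $\overline{\partial E\cap\Omega}$ is compact (using $\overline{\partial E\cap\Omega}\cap\partial\Omega=\emptyset$ in the first alternative, or the flattening-and-reflection covering in the second), so finitely many of the balls $B_{\rho/2}(x_i)$ cover it; by the density estimates plus $L^1$ convergence, $\partial E_h$ is contained in the union of these balls for all large $h$, hence is globally a $C^{1,\alpha}$ hypersurface, and the local $C^{1,\alpha}$ convergences glue to the global convergence \eqref{nonlocal}. The main obstacle — really the only non-routine point, since the interior statement is verbatim \cite[Theorem 3.3]{Julin} — is verifying that the reflection across the (flattened) Neumann boundary preserves strong quasiminimality with uniformly controlled constants and that the orthogonality hypothesis is exactly what makes the reflected limit set $C^{1,\alpha}$ rather than merely Lipschitz with a corner along $\partial\Omega$; this is where the hypothesis on how $\overline{\partial\{u=1\}\cap\Omega^\ell_\e}$ meets $\partial\Omega^\ell_\e$ in Theorem \ref{local} is used downstream, and it is the reason the corner points $(0,\e)^\ell$-edges of the rectangle must be excluded from the contact set.
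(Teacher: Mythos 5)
Your proposal is correct and takes the same approach as the paper, which does not give a self-contained proof but instead attributes Theorem~\ref{th:almgrenbis} to the well-established $\varepsilon$-regularity theory for perimeter quasiminimizers, citing \cite[Theorem~3.3]{Julin} (which already covers the smooth Neumann boundary case, so that the flattening-and-reflection step you describe is part of the cited result rather than an add-on). One small imprecision in your sketch: you invoke \emph{lower} semicontinuity of the excess under $L^1$ convergence, but lower semicontinuity gives $\mathrm{Exc}(E)\leq\liminf\mathrm{Exc}(E_h)$, which points the wrong way; what you actually need, and what holds for uniformly quasiminimizing sequences converging in $L^1$ (since $P(E_h;B_\rho)\to P(E;B_\rho)$ and the Gauss maps converge weakly-$*$), is that the excess \emph{converges}, so that smallness of $\mathrm{Exc}(E)$ at a fixed scale propagates to $\mathrm{Exc}(E_h)$ for $h$ large.
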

The convergence in \eqref{nonlocal} can be restated equivalently by saying that we may find a sequence $\Phi_h$ of diffeomorphisms of class $C^{1,\alpha}$ from $\overline \Omega$ onto itself  such that $\Phi_h(\partial E)=\partial E_h$ and $\|\Phi_h-I\|_{C^{1,\alpha}}\to 0$, where $I$ denotes the identity map. 

The following corollary is an adaptation of Theorem~\ref{th:almgrenbis} to the case of the cylindrical domains $\Omega^{\ell}_\e$ considered in Theorem~\ref{local}.
To state it, we need to introduce some notation for even extension of sets in $\Omega^{\ell}_\e$. Let us express any $z\in (0,\e)^{\ell}\times\Omega$ as 
$z=(x_1,\dots, x_\ell, y_1, \dots, y_n)$, with 
$(x_1,\dots, x_\ell)\in (0,\e)^{\ell}$ and $(y_1, \dots, y_n)\in \Omega$. Then given any set $E\subset  \Omega^{\ell}_\e$ we may perform infinitely many  even reflections of the characteristic function $\chi_{E}(x_1,\dots, x_\ell, y_1, \dots, y_n)$ with respect to the $x_1,\dots, x_\ell$ variables, to obtain the characteristic function of a set  we denote by $\tilde E\subset \R^{\ell}\times \Omega$. 
\begin{cor}\label{cor:almgren}
Let $E_h\subset\Omega^{\ell}_\e$ be  a sequence of  strong quasiminimizers in $\Omega^{\ell}_\e$ with uniform constants $\Lambda>0$ and $r>0$ and such that
$$
\chi_{E_h}\to \chi_E\quad\text{a.e.  in $\Omega^{\ell}_\e$}
$$
for some set $E\subset \Omega^{\ell}_\e$ such that $\tilde{E}$ is of class $C^{1, \alpha}(\R^{\ell}\times\Omega)$, 
$\alpha\in (0, \frac12)$, and  either  $\overline {\partial \tilde E\cap(\mathbb{R}^\ell\times 
\Omega)}\cap \partial (\mathbb{R}^\ell\times \Omega)=\emptyset$ or $\overline {\partial \tilde 
E\cap(\mathbb{R}^\ell\times \Omega)}$  meets $\partial (\mathbb{R}^\ell\times \Omega)$ 
orthogonally.

  Then,  for $h$ large enough $\partial E_h$ is of class $C^{1, \alpha}$  and $\{E_h\}$ satisfies \eqref{nonlocal}.
\end{cor}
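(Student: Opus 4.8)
The plan is to deduce the statement from Theorem~\ref{th:almgrenbis} by passing to the even reflection. Let $\tilde E_h\subset\R^{\ell}\times\Omega$ be the set obtained from $E_h$ by the infinitely many even reflections in the $x_1,\dots,x_{\ell}$ variables described just before the statement, so that $\tilde E_h$ is $2\e$-periodic in each $x_i$ and $\partial E_h=\partial\tilde E_h\cap\overline{\Omega^{\ell}_{\e}}$; define $\tilde E$ analogously. Three points then need to be checked: (i) $\chi_{\tilde E_h}\to\chi_{\tilde E}$ a.e.\ in $\R^{\ell}\times\Omega$; (ii) the sets $\tilde E_h$ are strong quasiminimizers in $\R^{\ell}\times\Omega$ with constants $\Lambda$ and $r'$ uniform in $h$; and (iii) the reflected limit $\tilde E$ satisfies the hypotheses of Theorem~\ref{th:almgrenbis}, i.e.\ it is of class $C^{1,\alpha}$ and $\overline{\partial\tilde E\cap(\R^{\ell}\times\Omega)}$ is either disjoint from, or orthogonal to, $\partial(\R^{\ell}\times\Omega)=\R^{\ell}\times\partial\Omega$. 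Granting these, Theorem~\ref{th:almgrenbis} (invoked locally, which is legitimate since its assertion and proof are of a purely local nature, and since by periodicity it is enough to work in a bounded neighbourhood of $\overline{\Omega^{\ell}_{\e}}$) yields $\partial\tilde E_h\to\partial\tilde E$ in $C^{1,\alpha}$; restricting to $\overline{\Omega^{\ell}_{\e}}$ gives that $\partial E_h$ is of class $C^{1,\alpha}$ for $h$ large and that \eqref{nonlocal} holds.

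Of these, (i) is immediate because the reflection acts on each fundamental cell as a fixed bi-Lipschitz map, and (iii) is exactly the hypothesis placed on $\tilde E$ in the statement of Corollary~\ref{cor:almgren} — in particular the $C^{1,\alpha}$ regularity of $\tilde E$ across the flat faces $\{x_i=0\}$ and $\{x_i=\e\}$ already encodes that $\partial E$ meets those faces orthogonally. The substantive point is (ii): even reflection must turn a strong quasiminimizer into a strong quasiminimizer without deteriorating the constants. Here I would use that, across a hyperplane $\{x_i=c\}$, the relative perimeter is additive under reflection and that an arbitrary competitor can be \emph{symmetrized} across such a hyperplane without increasing perimeter. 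Concretely, fix $r'\leq\min\{r,\e/2\}$ so that every ball of radius $r'$ meets at most one reflecting hyperplane in each coordinate direction. Given a competitor $\tilde F$ for $\tilde E_h$ with $\tilde F\,\Delta\,\tilde E_h$ contained in a ball $B_{r'}(q)$, replace $\tilde F$ on that ball by its successive reflection symmetrizations across the (at most $\ell$) relevant hyperplanes to obtain a symmetric competitor $\tilde F_s$ with $P_{\R^{\ell}\times\Omega}(\tilde F_s;B_{r'}(q))\leq P_{\R^{\ell}\times\Omega}(\tilde F;B_{r'}(q))$ and $|\tilde F_s\,\Delta\,\tilde E_h|\leq|\tilde F\,\Delta\,\tilde E_h|$. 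By symmetry $\tilde F_s$ corresponds, through the reflection map, to an admissible competitor $F$ for $E_h$ in $\Omega^{\ell}_{\e}$ with $F\,\Delta\,E_h$ contained in a ball of radius $r'\leq r$, so the quasiminimality of $E_h$ with constants $\Lambda,r$ applies; unfolding the resulting inequality back to $\R^{\ell}\times\Omega$ (the finitely many reflected copies contributing equal amounts, so that the multiplicative factors cancel) gives the quasiminimality of $\tilde E_h$ with constants $\Lambda$ and $r'$, uniformly in $h$.

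The main obstacle is thus step (ii), and within it the symmetrization estimate near the edges and corners of the box $(0,\e)^{\ell}$, where reflections in several coordinate directions interact: one has to verify that iterating single-hyperplane symmetrizations is legitimate (each step is a perturbation supported in the same small ball, keeps the set a competitor, and does not raise the perimeter) and that no perimeter is spuriously created on the reflecting hyperplanes themselves — a standard but slightly delicate density/trace argument for sets of finite perimeter. Everything else is bookkeeping together with a direct appeal to Theorem~\ref{th:almgrenbis}; in particular no new regularity theory is needed beyond what is quoted there, since $\R^{\ell}\times\partial\Omega$ is smooth and, by hypothesis, either disjoint from $\overline{\partial\tilde E\cap(\R^{\ell}\times\Omega)}$ or met orthogonally by it.
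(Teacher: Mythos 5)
Your proposal follows essentially the same route as the paper: reflect $E_h$, $E$ to obtain periodic sets $\tilde E_h, \tilde E$ in $\R^{\ell}\times\Omega$, check that the reflected sets are again strong quasiminimizers with uniform constants and that $\chi_{\tilde E_h}\to\chi_{\tilde E}$ a.e., and then apply Theorem~\ref{th:almgrenbis} with $\Omega$ replaced by $\R^{\ell}\times\Omega$. The paper states the quasiminimality of the reflected sets without proof, calling it ``straightforward to check,'' while you supply a symmetrization argument to justify it; the two texts are otherwise identical in structure.

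One small inaccuracy in your step (ii) is worth flagging. You assert that symmetrizing the competitor $\tilde F$ across a reflecting hyperplane produces a set $\tilde F_s$ with \emph{both} $P(\tilde F_s)\leq P(\tilde F)$ and $|\tilde F_s\,\Delta\,\tilde E_h|\leq|\tilde F\,\Delta\,\tilde E_h|$. For a single hyperplane $H$, taking the even extension of $\tilde F\cap\{x_1>0\}$ (call it $\tilde F^{+}$) or of $\tilde F\cap\{x_1<0\}$ (call it $\tilde F^{-}$), one has $P(\tilde F^{+})+P(\tilde F^{-})\leq 2P(\tilde F)$ and $|\tilde F^{+}\Delta\tilde E_h|+|\tilde F^{-}\Delta\tilde E_h|=2|\tilde F\Delta\tilde E_h|$; hence one can choose a side so that \emph{one} of the two quantities does not increase, but not in general both simultaneously. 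This is harmless: choosing the side with smaller perimeter only degrades $\Lambda$ to $2^{\ell}\Lambda$ after $\ell$ reflections, which remains a uniform constant and is all that Theorem~\ref{th:almgrenbis} requires. Alternatively, you can dispense with any choice by applying the quasiminimality of $E_h$ to \emph{both} half-competitors $F^{+}$ and $F^{-}$ (reflected into the fundamental cell) and summing the two inequalities: the perimeter terms add up to $P_{\R^{\ell}\times\Omega}(\tilde F;B_{r'})$ and the volume terms add up to $|\tilde F\Delta\tilde E_h|$ exactly, giving the quasiminimality of $\tilde E_h$ with the \emph{same} constants $\Lambda,r$, as asserted in the paper. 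Everything else in your write-up — the periodicity/localization remark needed to invoke Theorem~\ref{th:almgrenbis} on the unbounded set $\R^{\ell}\times\Omega$, the observation that $C^{1,\alpha}$ regularity of $\tilde E$ encodes orthogonality of $\partial E$ to the flat faces, and the choice $r'\leq\min\{r,\e/2\}$ so that each ball meets at most one reflecting hyperplane per direction — is correct and a useful expansion of what the paper leaves implicit.
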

\begin{proof}
The point here is that the boundary $\partial \Omega^{\ell}_\e$ has a ``singular'' part. 
The trick is to remove these singularities by reflection. 
 It is straightforward to check that the reflected sets $\tilde E_h$ are strong quasiminimizers with the same uniform constants $\Lambda>0$ and $r>0$, and that
$$
\chi_{\tilde E_h}\to \chi_{\tilde E}\quad\text{a.e. in $ \R^{\ell}\times \Omega$.}
$$
The conclusion then follows by applying Theorem~\ref{th:almgrenbis} with $\Omega$ replaced by $\R^{\ell}\times \Omega$.

\end{proof}
\noindent
{\bf Proof of Theorem \ref{local}.} 
As mentioned before the proof is essentially contained in \cite{Julin}, where the general strategy devised in \cite{AFM} in the periodic setting has been adapted to the Neumann case. It consists of two main steps.

\noindent{\bf Step 1.} One shows that the positive definiteness of $\delta^2 E^{\gamma}_{\Omega^{\ell}_\e}(u;f)$ implies that 
$u$ is an isolated local minimizer with respect to small $W^{2,p}$- perturbations, for all $p$ sufficiently large, of the free-boundary 
$\partial\{u=1\}\cap\Omega^{\ell}_\e$. Precisely, for all $p$ sufficiently large one can show   the existence of $\delta>0$ such that if 
$\Phi:\overline\Omega^{\ell}_\e\to \overline\Omega^{\ell}_\e$ is a diffeomorphism of class $W^{2,p}$ and $\int_{\Omega^{\ell}_{\e}}u\circ\Phi\, dx=\int_{\Omega^{\ell}_{\e}}u\, dx$, then 
\[
E^{\gamma}_{\Omega^{\ell}_{\e}}(u\circ\Phi)\geq E^{\gamma}_{\Omega^{\ell}_{\e}}(u)+C\norm{u-(u\circ\Phi)}_{L^1(\Omega^{\ell}_\e)}^2
\quad\mbox{provided}\quad \norm{\Phi-I}_{W^{2,p}(\Omega^{\ell}_\e)}<\delta.\]
This fact follows from \cite[Proposition 5.2]{Julin}: indeed,  due to the assumptions on $\partial\{u=1\}\cap\Omega^{\ell}_\e$,  
the argument is not affected by the presence of a ``singular'' part in $\partial\Omega^{\ell}_\e$.

\noindent{\bf Step 2.} One shows that the conclusion of the previous step implies  the thesis of the Theorem. 
This can be argued exactly as in \cite[Section 6]{Julin} (see also \cite[Proof of Theorem 1.1]{AFM}). The proof can be reproduced word for word, using  Corollary~\ref{cor:almgren} instead of  \cite[Theorem 3.3]{Julin}.
\qed

 As in the previous two sections, for convenience only at this point we fix
the mass constraint $m$ to be zero. We now present our main result for the nonlocal isoperimetric problem
posed on the domain $\Omega_\e:=(0,\e)\times (0,1)$. (This corresponds to $\ell=1$ and $\Omega= (0,1)$ in the notation $\Omega^{\ell}_\e$ used previously in this section.) 
\begin{thm}\label{main} For any positive integer $k$, fix $\gamma>0$ in the interval given by \eqref{kwins}. 
Let $a$ be any positive number smaller than $\pi\sqrt{\frac{k}{2\gamma}}$.
Then for all sufficiently large integers $j$, the minimizers $\pm u_k$ given by \eqref{kodd}-\eqref{keven}
of the one-dimensional energy $\Ez$
are also the minimizers of the two-dimensional energy $E^{\gamma}_{\Omega_{\e_j}}$ where $\e_j:=\frac{a}{j}$
and are the only minimizers of this energy.
 \end{thm}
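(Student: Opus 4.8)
\textbf{Proof plan for Theorem \ref{main}.} The strategy is a contradiction argument combining $\Gamma$-convergence from Section 2, the explicit one-dimensional analysis of Section 3, the stability estimate of Proposition \ref{Stability}, and the stability-implies-local-minimality result Theorem \ref{local}. First I would fix $k$ and $\gamma\in(\gamma_1(k),\gamma_2(k))$ as in \eqref{kwins}, fix $a<\pi\sqrt{k/2\gamma}$, and set $\e_j=a/j$. By Proposition \ref{global}, $\pm u_k$ are the unique global minimizers of $\Ez$, and by Proposition \ref{Stability} (since $\e_j\le a<\pi\sqrt{k/2\gamma}$ for all $j$), $u_k$ is a stable critical point of $E^{\gamma}_{\Omega_{\e_j}}$ for every $j$; since the jump set $\Gamma$ meets $\partial\Omega_{\e_j}$ only at the regular (non-corner) part of the boundary, Theorem \ref{local} applies and gives uniform-in-$j$ constants $\delta,C>0$ (one should check the constants can be taken independent of $j$ — see below) such that $u_k$ is an $L^1$-isolated local minimizer of $E^{\gamma}_{\Omega_{\e_j}}$, i.e.\ $E^{\gamma}_{\Omega_{\e_j}}(w)\ge E^{\gamma}_{\Omega_{\e_j}}(u_k)+C\|w-u_k\|_{L^1}^2$ whenever $\|w-u_k\|_{L^1(\Omega_{\e_j})}<\delta$.

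Next, suppose the conclusion fails: after passing to a subsequence, for each $j$ there is a global minimizer $w_j$ of $E^{\gamma}_{\Omega_{\e_j}}$ (minimizers exist by the direct method) with $w_j$ not equal (up to sign and null sets) to $u_k$. Rescaling via $\tilde w_j(x_1,y_1):=w_j(\e_j x_1,y_1)$, the normalized energies satisfy $\tilde E^{\gamma}_{\e_j}(\tilde w_j)=\frac1{\e_j}E^{\gamma}_{\Omega_{\e_j}}(w_j)\le \frac1{\e_j}E^{\gamma}_{\Omega_{\e_j}}(u_k)=\Ez(u_k)$ (using that $u_k$ depends only on $y$ and so its rescaling is $u_k$ itself, with $\tilde E^{\gamma}_{\e_j}(u_k)=\Ez(u_k)$). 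Hence $\{\tilde w_j\}$ has bounded energy, so by the BV-compactness noted after Theorem \ref{gc}, a further subsequence converges in $L^1(\Omega_1)$ to some $\tilde w$, and by the fundamental property of $\Gamma$-convergence together with Theorem \ref{gc}, $\tilde w$ is a global minimizer of $\Ez$. By Proposition \ref{global}, $\tilde w=\pm u_k$; replacing $w_j$ by $-w_j$ if necessary, $\tilde w=u_k$. But then $\|\tilde w_j-u_k\|_{L^1(\Omega_1)}\to 0$, and since $\|w_j-u_k\|_{L^1(\Omega_{\e_j})}=\e_j\|\tilde w_j-u_k\|_{L^1(\Omega_1)}\to 0$, for $j$ large we have $\|w_j-u_k\|_{L^1(\Omega_{\e_j})}<\delta$, so the local-minimality inequality forces $\|w_j-u_k\|_{L^1(\Omega_{\e_j})}=0$, contradicting $w_j\ne u_k$. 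This simultaneously proves that $u_k$ is \emph{a} minimizer and that it is the \emph{only} one (up to sign), since any minimizer $w_j$ must coincide with $u_k$ by the same argument.

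The main obstacle is making the local-minimality threshold $\delta$ (and the constant $C$) uniform in $j$, since Theorem \ref{local} as stated produces $\delta=\delta(\Omega^{\ell}_\e)$ for a fixed domain, whereas here the domain $\Omega_{\e_j}$ shrinks in one direction. There are two natural ways around this. One is to rescale everything to the fixed domain $\Omega_1$ and observe that, after rescaling, the relevant object is the family of functionals $\tilde E^{\gamma}_{\e_j}$ on $\Omega_1$; the quantitative local minimality then needs to be proved with constants depending only on a lower bound for the (rescaled) second variation, and the stability estimate \eqref{2sv} in fact gives a \emph{uniform} positive lower bound for $\delta^2 E^{\gamma}_{\Omega_{\e_j}}(u_k;f)$ once $a<\pi\sqrt{k/2\gamma}$ (the coefficient $(\pi/\e_j)^2-2\gamma/k$ is increasing in $j$, and the remaining terms are nonnegative as shown), so the $W^{2,p}$-local-minimality argument of Step 1 of Theorem \ref{local}, applied to the $\e$-dependent functionals, yields $j$-independent $\delta,C$. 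The second route, which is cleaner for the write-up, is to avoid uniformity altogether by noting that for the contradiction we only need, for each fixed large $j$, that $u_k$ is \emph{some} $L^1$-isolated local minimizer of $E^{\gamma}_{\Omega_{\e_j}}$ with its own $\delta_j>0$, and separately that $\|w_j-u_k\|_{L^1(\Omega_{\e_j})}\to 0$; but this does \emph{not} close the argument unless $\delta_j$ is bounded below, so in fact the uniform estimate is genuinely needed. I would therefore carry out the first route: reduce to $\Omega_1$, record that \eqref{2sv} gives a coercivity constant for $\delta^2$ bounded below uniformly in $j$, invoke the (rescaled) Step 1 of Theorem \ref{local} with uniform constants, and then run the $\Gamma$-convergence contradiction above. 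A minor additional point to verify is that the orthogonality/regularity hypothesis of Theorem \ref{local} holds for $u_k$ on each $\Omega_{\e_j}$ — this is immediate since $\Gamma_j=(0,\e_j)\times\{y_j\}$ meets $\partial\Omega_{\e_j}$ orthogonally along the two vertical sides and never touches the corners.
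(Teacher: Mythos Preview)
Your overall architecture is right, and you correctly isolate the one genuine difficulty: the local-minimality constants $\delta,C$ from Theorem \ref{local} are produced for a \emph{fixed} domain, while your contradiction needs them uniform along the shrinking sequence $\Omega_{\e_j}$. Your proposed fix --- tracking through the proof of Theorem \ref{local} (or its rescaled version on $\Omega_1$) to extract $j$-independent constants --- is plausible but is not a proof as written, and in fact the paper explicitly flags this route as unverified (see the Remark following the proof). So as it stands there is a gap.

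The paper closes this gap with a reflection trick that you are missing, and which is precisely why $\e_j$ is taken of the special form $a/j$. One applies Theorem \ref{local} only \emph{once}, on the fixed domain $\Omega_a$ (where $u_k$ is stable by Proposition \ref{Stability} since $a<\pi\sqrt{k/2\gamma}$), obtaining fixed constants $\delta,C$. Given a minimizer $u_{\e_j}$ on $\Omega_{\e_j}$, evenly reflect it $j-1$ times in $x$ to obtain a function $u^r_{\e_j}$ on $\Omega_a$; the associated potential reflects the same way, so Neumann conditions are preserved and
\[
E^{\gamma}_{\Omega_a}(u^r_{\e_j})=j\,E^{\gamma}_{\Omega_{\e_j}}(u_{\e_j}),\qquad
E^{\gamma}_{\Omega_a}(u_k)=j\,E^{\gamma}_{\Omega_{\e_j}}(u_k).
\]
Moreover $\|u^r_{\e_j}-u_k\|_{L^1(\Omega_a)}=\|\tilde u_j-u_k\|_{L^1(\Omega_a)}\to 0$ by the $\Gamma$-convergence step (your rescaled argument, transported to $\Omega_a$ instead of $\Omega_1$). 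Now the single local-minimality inequality on $\Omega_a$ applies to $w=u^r_{\e_j}$ for $j$ large, and dividing by $j$ forces $u_{\e_j}=u_k$. No uniformity in $j$ is ever needed.
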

 
 A typical minimizer is depicted in Figure \ref{stripes}.
The proof consists of a combination of the $\Gamma$-convergence of Section 2, the one-dimensional minimality of $u_k$ established
in Section 3, the two-dimensional stability shown in the previous section and Theorem \ref{local}. 

\begin{figure}
\centerline{{\includegraphics[scale = 0.5, clip = true, trim = 5cm 12cm 8cm 6cm]{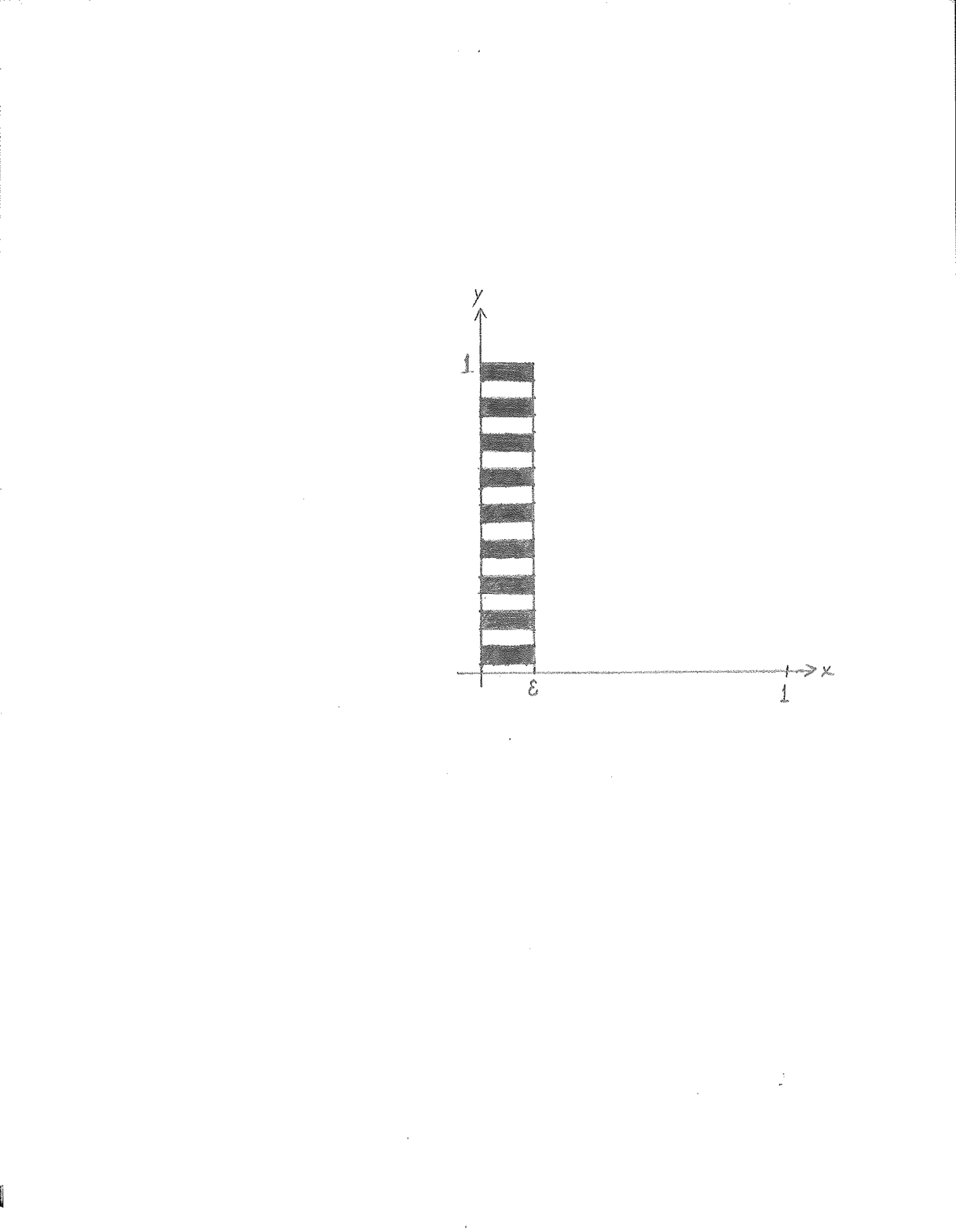}}} \caption{Graph of
a typical minimizer of $E^{\gamma}_{\Omega_\e}$.}
\label{stripes}
\end{figure}

\noindent
{\bf Proof of Theorem \ref{main}.} Throughout the proof, $k$ and then $\gamma$ are fixed so that $u_k$ minimizes $\Ez.$
For any positive integer $j$, let us denote by $u_{\e_j}$ a global minimizer of $E^{\gamma}_{\Omega_{\e_j}}.$ We will argue that
$u_{\e_j}=u_k$ for all large enough integers $j$. 

Note first that from the choice of $a$,
Proposition \ref{Stability} guarantees that $u_k$ is a stable critical point of $E^{\gamma}_{\Omega_a}$.
Applying Theorem \ref{local}, we can then assert the local minimality of $u_k$ in $\Omega_a$, namely the existence of a positive $\delta$ and $C$ such that
\begin{equation}
E^{\gamma}_{\Omega_a}(w)\geq E^{\gamma}_{\Omega_a}(u_k)+C\norm{w-u_k}_{L^1(\Omega_a)}^2\quad\mbox{provided}\quad \norm{w-u_k}_{L^1(\Omega_a)}<\delta.
\label{stab2}\end{equation}

With $a$ now fixed, we apply the $\Gamma$-convergence result Theorem \ref{gc} to the sequence of
functionals $\tilde{E}^{\gamma}_j:L^1(\Omega_a)\to \R$ defined via
\[
\tilde{E}^{\gamma}_j(\tilde{u}):=\frac{1}{\e_j}E^{\gamma}_{\Omega_{\e_j}}(u)\quad
\mbox{where}\quad u(x,y):=\tilde{u}(j\,x,y)\quad
\mbox{for any}\quad \tilde{u}\in L^1(\Omega_a),
\]
 cf. \eqref{tEdefn}. Of course, Theorem \ref{gc} is phrased
in terms of a sequence of rescaled nonlocal isoperimetric problems defined on the unit square $\Omega_1$, corresponding to $a=1$ in the present notation,
but the result is unchanged
if we replace $\Omega_1$ by $\Omega_a$. Since convergent sequences of minimizers have a limit which minimizes the $\Gamma$-limit $E_0^{\gamma}$, we conclude that the
sequence $\tilde{u}_{j}:\Omega_a\to\R$ given by $\tilde{u}_{j}(x,y):=u_{\e_j}(x/j,y)$ which minimizes
$\tilde{E}^{\gamma}_j$ must satisfy the condition
\begin{equation}
\tilde{u}_j\to u_k\;\mbox{or}\;-u_k\;\mbox{in}\; L^1(\Omega_a)\;\mbox{as}\;j\to\infty.\label{ggc}
\end{equation}
The indeterminacy in \eqref{ggc} is simply due to the nonuniqueness associated with the choice of mass constraint
$m=0$ since for any $u$ one has $\tilde{E}^{\gamma}_j(u)=\tilde{E}^{\gamma}_j(-u)$ and $E_0^{\gamma}(u)=E_0^{\gamma}(-u)$.
Let us adopt the convention that if necessary, we multiply $\tilde{u}_j$ by $-1$ so as to obtain $\tilde{u}_j\to u_k$ in $L^1$.

Now for any integer $j>1$, we evenly reflect $j-1$ times with respect to $x$ the minimizer $u_{\e_j}:\Omega_{\e_j}\to\R$  to build
a function defined in $\Omega_a$ that we denote by $u^r_{\e_j}$. If we then denote by $v_{\e_j}$ the solution to the Poisson
problem \eqref{Poisson} in $\Omega_{\e_j}$ with right-hand side $u_{\e_j}$, one readily checks that the solution to \eqref{Poisson}
in $\Omega_a$ with right-hand side $u^r_{\e_j}$ is simply given by the repeated even reflection of $v_{\e_j}$ as well. Note in particular
that even reflection preserves the required homogeneous Neumann boundary conditions. We write
$v^r_{\e_j}$ for this reflection of $v_{\e_j}$.

We next observe that
\[
\norm{u^r_{\e_j}-u_k}_{L^1(\Omega_a)}=j\,\norm{u_{\e_j}-u_k}_{L^1(\Omega_{\e_j})}
=\int_0^1\int_0^{a} \abs{u_{\e_j}(x/j,y)-u_k}\,dx\,dy=\norm{\tilde{u}_j-u_k}_{L^1(\Omega_a)}.
\]
Invoking \eqref{ggc}, we conclude that
$\norm{u^r_{\e_j}-u_k}_{L^1(\Omega_a)}<\delta$ for all sufficiently large integers $j$.
Consequently we may apply \eqref{stab2} with $w=u^r_{\e_j}$ to find that
\begin{equation}
E^{\gamma}_{\Omega_a}(u^r_{\e_j})\geq E^{\gamma}_{\Omega_a}(u_k)+C\norm{u^r_{\e_j}-u_k}_{L^1(\Omega_a)}^2.
\label{end}\end{equation}

However, for both $u^r_{\e_j}$ and $u_k$ the contribution to the energy $E^{\gamma}_{\Omega_a}$ within
each of the $j$ rectangles of width $\e_j$ is identical, so that 
\[E^{\gamma}_{\Omega_a}(u^r_{\e_j})=j\,E^{\gamma}_{\Omega_{\e_j}}(u_{\e_j})\quad
\mbox{and}\quad E^{\gamma}_{\Omega_a}(u_k)=j\,E^{\gamma}_{\Omega_{\e_j}}(u_k).\]
 By \eqref{end}, it follows that
\[
E^{\gamma}_{\Omega_{\e_j}}(u_{\e_j})\geq E^{\gamma}_{\Omega_{\e_j}}(u_k)+\,C\,\norm{u_{\e_j}-u_k}_{L^1(\Omega_{\e_j})}^2\quad
\mbox{for all sufficiently large integers}\;j,\]
contradicting the minimality of $u_{\e_j}$ in $\Omega_{\e_j}$ unless $u_{\e_j}\equiv u_k$.\qed
\brk
The reason for the restriction in the statement of Theorem \ref{main} to rectangles
of width $\frac{a}{j}$ is to allow for use of the reflection argument in the proof.
One could remove this restriction by strengthening Theorem \ref{local}, 
specifically by showing that under the same assumptions the conclusion holds not only for $\Omega_\e$ but 
also for $\Omega_\eta$ for all $\eta$ sufficiently close to $\e$ and with $\delta$ and $C$ 
independent of $\eta$. This could no doubt be accomplished by repeating the argument of \cite{AFM} or \cite{Julin}, and by verifying that the minimality neighborhood is 
independent of $\eta$. However, we have not checked the details.
\erk

\section{Generalizations to higher dimensions}

We conclude with two generalizations of Theorem \ref{main} applicable in higher dimensions
to indicate the scope of the method. In the first, we consider $\E$ on a thin rectangular box in arbitrary dimension
that collapses with $\e$ to a line segment.
Then one can again assert that global minimizers of the one-dimensional problem remain global minimizers on
sufficiently thin boxes:
\begin{thm}\label{main2} Let $k$ and $\ell$ be any positive integers and fix $\gamma>0$ in the interval given by \eqref{kwins}. 
Let $a$ be any positive number less than  $\pi\sqrt{\frac{k}{2\gamma}}$.
Then for all sufficiently large integers $j$, the minimizers $\pm u_k$ given by \eqref{kodd}-\eqref{keven}
of the one-dimensional energy $\Ez$
are also minimizers of the $(\ell+1)$-dimensional energy $E^{\gamma}_{\Omega^{\ell}_{\e_j}}$ given by \eqref{Egammaintro} posed on the domain $\Omega^{\ell}_{\e_j}=
(0,\e_j)^{\ell}\times (0,1)$
 where $\e_j:=\frac{a}{j}$. Furthermore, these are the only minimizers of this energy.
 \end{thm}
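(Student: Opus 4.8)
The plan is to reproduce the proof of Theorem \ref{main} almost verbatim, replacing the single thin direction by $\ell$ of them. The four ingredients to re-examine are: (i) the $\Gamma$-convergence of Section 2, (ii) the one-dimensional minimality of Section 3, (iii) the stability computation of Section 4, and (iv) the local-minimality statement Theorem \ref{local}. Of these, (ii) carries over untouched, since the $\Gamma$-limit will again be the $\ell$-independent functional $\Ez$, so Proposition \ref{global} keeps identifying $\pm u_k$ as the unique minimizers on the interval \eqref{kwins}; and (iv) needs no change, as Theorem \ref{local} was already stated for the cylindrical domains $\Omega^{\ell}_{\e}=(0,\e)^{\ell}\times\Omega$, here with $\Omega=(0,1)$.

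First I would redo the $\Gamma$-convergence. Writing a point of $(0,1)^{\ell+1}$ as $(x_1,\dots,x_\ell,y_1)$ and rescaling $\tilde u(x_1,\dots,x_\ell,y_1):=u(\e x_1,\dots,\e x_\ell,y_1)$, the perimeter term becomes $\int_{\partial^*\{\tilde u=1\}}\sqrt{\e^{-2}(n_1^2+\dots+n_\ell^2)+n_{\ell+1}^2}\,\dH^{\ell}$ and the Dirichlet term becomes $\int(\e^{-2}\abs{\nabla_x\tilde v}^2+\tilde v_{y_1}^2)$. The lower-semicontinuity argument of Theorem \ref{gc} then goes through word for word: finiteness of the liminf forces $\abs{\tilde u_{x_i}}(\Omega_1)=0$ for every $i=1,\dots,\ell$, hence $\tilde u=\tilde u(y_1)$; the Poincar\'e plus weak-$H^1$ argument forces the limit $\hat v$ of $\tilde v_\e$ to satisfy $\nabla_x\hat v=0$, and testing the weak formulation of the rescaled PDE against $\phi=\phi(y_1)$ identifies $\hat v$ with the solution of \eqref{1dp}; the recovery sequence is again the constant sequence. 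Compactness follows from the uniform BV bound. Thus the rescaled functionals $\Gamma$-converge to $\Ez$.

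Next I would re-run the second-variation computation of Section 4 with $\Omega=\Omega^{\ell}_{\e}$ and $u=u_k$. The jump set is now $\Gamma=\cup_{j=1}^k\Gamma_j$ with $\Gamma_j=(0,\e)^{\ell}\times\{y_j\}$; these are flat, so $B_\Gamma\equiv 0$, and $\Gamma_j$ meets the flat faces $\{x_i\in\{0,\e\}\}$ of $\partial\Omega^{\ell}_{\e}$ orthogonally, so the $B_{\partial\Omega}$ term again vanishes near $\overline\Gamma\cap\partial\Omega^{\ell}_{\e}$. Decomposing $f_j=a_j+g_j$ with $a_j=\fint_{\Gamma_j}f_j$, the Poincar\'e inequality on the cube $(0,\e)^{\ell}$ still gives $\int_{\Gamma_j}\abs{\nabla_{\Gamma_j}g_j}^2\geq(\pi/\e)^2\int_{\Gamma_j}g_j^2$, because the first nonzero Neumann eigenvalue of $(0,\e)^{\ell}$ remains $(\pi/\e)^2$. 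Since $v_{\bar f}$ and $v_k$ depend only on $y_1$, the Green's-function cross term still vanishes and the computations of \eqref{Green} and \eqref{flux} reproduce with the single factor $\e$ replaced by $\e^{\ell}$ throughout and with the tridiagonal matrix $\mathrm{tridiag}(1,2,1)$ and its eigenvalues $2+2\cos(j\pi/k)$ unchanged. One therefore arrives at the exact analogue of \eqref{2sv}, and concludes that $u_k$ is a stable critical point of $E^{\gamma}_{\Omega^{\ell}_{\e}}$ under precisely the same condition $\e<\pi\sqrt{k/(2\gamma)}$ as in \eqref{stable}; this is the reason the same interval \eqref{kwins} and the same bound on $a$ appear in the statement of Theorem \ref{main2}.

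Finally I would assemble the pieces as in the proof of Theorem \ref{main}. Fixing $a<\pi\sqrt{k/(2\gamma)}$, stability of $u_k$ in $\Omega^{\ell}_a$ together with Theorem \ref{local} yields $\delta,C>0$ with $E^{\gamma}_{\Omega^{\ell}_a}(w)\geq E^{\gamma}_{\Omega^{\ell}_a}(u_k)+C\norm{w-u_k}_{L^1(\Omega^{\ell}_a)}^2$ whenever $\norm{w-u_k}_{L^1(\Omega^{\ell}_a)}<\delta$. Let $u_{\e_j}$ minimize $E^{\gamma}_{\Omega^{\ell}_{\e_j}}$ with $\e_j=a/j$; the $\Gamma$-convergence above forces the rescaled minimizers $\tilde u_j(x,y):=u_{\e_j}(x/j,y)$ to converge in $L^1(\Omega^{\ell}_a)$ to $u_k$ (after a global sign change). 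Reflecting $u_{\e_j}$ evenly $j-1$ times in each of the $\ell$ thin directions produces a function $u^r_{\e_j}$ on $\Omega^{\ell}_a$ built from $j^{\ell}$ translated copies; even reflection preserves the Neumann conditions and, as in Section 5, the reflection of the corresponding $v_{\e_j}$ solves the Poisson problem on $\Omega^{\ell}_a$, so $E^{\gamma}_{\Omega^{\ell}_a}(u^r_{\e_j})=j^{\ell}E^{\gamma}_{\Omega^{\ell}_{\e_j}}(u_{\e_j})$ and $E^{\gamma}_{\Omega^{\ell}_a}(u_k)=j^{\ell}E^{\gamma}_{\Omega^{\ell}_{\e_j}}(u_k)$, while $\norm{u^r_{\e_j}-u_k}_{L^1(\Omega^{\ell}_a)}=j^{\ell}\norm{u_{\e_j}-u_k}_{L^1(\Omega^{\ell}_{\e_j})}=\norm{\tilde u_j-u_k}_{L^1(\Omega^{\ell}_a)}\to 0$. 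For $j$ large the local-minimality inequality applies with $w=u^r_{\e_j}$, and dividing by $j^{\ell}$ gives $E^{\gamma}_{\Omega^{\ell}_{\e_j}}(u_{\e_j})\geq E^{\gamma}_{\Omega^{\ell}_{\e_j}}(u_k)+Cj^{\ell}\norm{u_{\e_j}-u_k}_{L^1(\Omega^{\ell}_{\e_j})}^2$, contradicting the minimality of $u_{\e_j}$ unless $u_{\e_j}\equiv\pm u_k$. I expect no essential obstacle here; the only points requiring care are the bookkeeping of $\e$-powers in the stability recomputation (to confirm the threshold is genuinely unchanged) and the verification that $u_k$ meets the hypotheses of Theorem \ref{local} on $\Omega^{\ell}_{\e}$ — which it does, since after the even reflections of Corollary \ref{cor:almgren} in all thin directions the domain becomes $\R^{\ell}\times(0,1)$ and the reflected free boundary $\R^{\ell}\times\{y_1,\dots,y_k\}$ is smooth and disjoint from $\R^{\ell}\times\{0,1\}$.
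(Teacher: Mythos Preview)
Your overall strategy matches the paper's exactly: redo the $\Gamma$-convergence with $\ell$ thin variables, observe that the stability computation of Section 4 goes through verbatim with the same threshold $\e<\pi\sqrt{k/(2\gamma)}$, invoke a local-minimality result on $\Omega^{\ell}_a$, and finish via even reflection in all $\ell$ thin directions to produce $j^{\ell}$ copies. There is, however, one genuine gap in your item (iv).

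You assert that Theorem \ref{local} applies directly with $\Omega=(0,1)$, but its hypothesis requires that $\overline{\partial\{u=1\}\cap\Omega^{\ell}_\e}$ meet only the \emph{regular} part of $\partial\Omega^{\ell}_\e$. For the lamellar $u_k$ this closure is $[0,\e]^{\ell}\times\{y_1,\dots,y_k\}$; once $\ell\geq 2$ it contains points such as $(0,\dots,0,y_j)$ lying on edges of the box, i.e.\ on the non-regular part of $\partial\Omega^{\ell}_\e$. So the hypothesis fails for $\ell\geq 2$. Your final sentence tries to patch this via the reflection of Corollary \ref{cor:almgren}, but that corollary only handles Step 2 of the proof of Theorem \ref{local} (upgrading $W^{2,p}$-local to $L^1$-local minimality through regularity of quasiminimizers). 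Step 1 --- passing from positive second variation to $W^{2,p}$-local minimality --- uses \cite[Proposition 5.2]{Julin}, whose diffeomorphism construction is not covered when the free boundary touches the singular set of $\partial\Omega^{\ell}_\e$. The paper closes precisely this gap with a separate statement, Theorem \ref{localbis}: it exploits the \emph{flatness} of the lamellar interfaces to describe nearby admissible configurations as graphs $h_i\in W^{2,p}(\Gamma_i)$ with $\nabla h_i\cdot n_{\Gamma_i}=0$ on $\partial\Gamma_i$ and builds the volume-preserving connecting flow explicitly as $x\mapsto x+t\,h_i(\pi_i(x))$ near each $\Gamma_i$, thereby recovering Step 1 without appealing to \cite[Lemma 5.3]{Julin}. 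With this modification in place, the remainder of your argument is correct and coincides with the paper's.
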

The proof of the theorem needs the following adaptation of Theorem~\ref{local} to the present setting.
\begin{thm}\label{localbis} Assume that for a positive integer $k$  the lamellar configuration $u_k$ given by \eqref{kodd}-\eqref{keven} satisfies $\delta^2 E^{\gamma}_{\Oe}(u_k;f)>0$ for all nontrivial
$f\in H^1(\partial\{u_k=1\}\cap\Oe)$ satisfying $\int f=0$.
 Then,  $u_k$  is an isolated local $L^1$-minimizer; i.e., 
there exist $\delta$ and $C>0$ such that
\[
E^{\gamma}_{\Omega_{\e}}(w)\geq E^{\gamma}_{\Omega_{\e}}(u_k)+C\norm{u_k-w}_{L^1(\Oe)}^2\quad\mbox{whenever}\quad \norm{u_k-w}_{L^1(\Oe)}<\delta\text{ and }\int_{\Oe} w\, dx=\int_{\Oe} u_k\, dx.\]
 \end{thm}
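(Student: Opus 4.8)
The plan is to obtain Theorem~\ref{localbis} as a direct consequence of Theorem~\ref{local}, applied with $n=1$, $\ell=1$ and $\Omega=(0,1)$, so that $\Omega^{\ell}_\e=(0,\e)\times(0,1)=\Oe$ and the distinguished critical point is $u=u_k$. The positivity of $\delta^2 E^{\gamma}_{\Oe}(u_k;f)$ on admissible $f$ is assumed in the statement, so the only thing left to verify is that $u_k$ is a \emph{regular critical point} of $E^{\gamma}_{\Oe}$ and that $\overline{\partial\{u_k=1\}\cap\Oe}$ meets only the regular (smooth) part of $\partial\Oe$; once these geometric facts are in hand, Theorem~\ref{local} yields the claimed quadratic $L^1$-local minimality verbatim.

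First I would check regularity and criticality. The free boundary $\Gamma=\partial\{u_k=1\}\cap\Oe$ is the finite union of the flat horizontal segments $\Gamma_j=(0,\e)\times\{y_j\}$, hence of class $C^\infty$ up to $\partial\Oe$, with identically vanishing mean curvature. Since $u_k=u_k(y)$, the potential $v_k$ solving \eqref{Poisson} coincides with the solution of the one-dimensional problem \eqref{1dode} and so depends only on $y$; and $u_k$ was constructed in Section~3 exactly so that \eqref{1dcrit} holds, i.e.\ $v_k(y_j)$ is the same for every $j$. Therefore $H+4\gamma v_k$ is constant along $\Gamma$, which is precisely \eqref{2dcrit}. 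The orthogonality condition along $\partial\Oe\cap\overline\Gamma$ is immediate, as each horizontal segment $\Gamma_j$ meets the vertical sides $\{0\}\times(0,1)$ and $\{\e\}\times(0,1)$ at a right angle. Next I would note that $\overline\Gamma\cap\partial\Oe=\{(0,y_j),(\e,y_j):j=1,\dots,k\}$, and since $y_j=\frac{2j-1}{2k}\in(0,1)$ by \eqref{yj}, all of these points lie in the interiors of the two vertical edges, i.e.\ in the regular part of $\partial\Oe$. This is exactly the hypothesis allowing the reflection construction of Corollary~\ref{cor:almgren} to be invoked inside the proof of Theorem~\ref{local}: reflecting $u_k$ repeatedly across the vertical sides produces a lamellar pattern on $\R\times(0,1)$ whose free boundary consists of horizontal lines meeting $\R\times\{0,1\}$ orthogonally and with no singular boundary points.

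With all the hypotheses of Theorem~\ref{local} verified, its conclusion provides the desired $\delta,C>0$ and the inequality in the statement of Theorem~\ref{localbis}. The ``main obstacle'' is therefore not analytic at all but simply the careful checking of these geometric hypotheses for the lamellar configuration — regularity of $\Gamma$ up to the boundary, the criticality identity \eqref{2dcrit}, orthogonal intersection with the flat faces, and avoidance of the corners of the rectangle; once these are in place the result follows formally from the machinery developed earlier in this section.
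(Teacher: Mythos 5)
Your plan is to read Theorem~\ref{localbis} as the special case $n=1$, $\ell=1$ of Theorem~\ref{local} and then simply check the hypotheses of the latter. The verification of criticality and orthogonality that you carry out is fine, but the reduction itself does not work, because Theorem~\ref{localbis} is not stated and is not needed only in the planar setting. It sits in Section~6 precisely as the ``adaptation of Theorem~\ref{local} to the present setting,'' where $\Oe$ is being used as shorthand for the higher-dimensional box $\Omega^{\ell}_{\e}=(0,\e)^{\ell}\times(0,1)$ appearing in Theorem~\ref{main2}; its proof invokes this implicitly (``here $\overline{\partial\{u_k=1\}\cap\Oe}$ meets also the non-regular part of $\partial\Oe$''), and the proof of Theorem~\ref{main2} explicitly calls Theorem~\ref{localbis} for \emph{arbitrary} $\ell$ in place of Theorem~\ref{local}. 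Your claim that $\overline{\Gamma}\cap\partial\Oe=\{(0,y_j),(\e,y_j):j\}$ and therefore lies in the regular part of the boundary is only true when $\ell=1$. As soon as $\ell\geq 2$, the interfaces are $\Gamma_j=(0,\e)^{\ell}\times\{y_j\}$, and $\overline{\Gamma_j}\cap\partial\Omega^{\ell}_{\e}=\partial\bigl([0,\e]^{\ell}\bigr)\times\{y_j\}$ contains the edges and corners of the $\ell$-cube cross-section, which are singular points of $\partial\Omega^{\ell}_{\e}$. So the crucial hypothesis of Theorem~\ref{local} --- that the free boundary closure avoid the singular set --- genuinely fails, and Theorem~\ref{local} cannot be applied verbatim.

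This is the whole point of stating Theorem~\ref{localbis} separately: it handles exactly the situation Theorem~\ref{local} excludes, by exploiting the fact that the interfaces of $u_k$ are \emph{flat}. The paper's proof replaces the general flow construction of \cite[Lemma 5.3]{Julin} (which is what requires the regularity assumption on $\overline{\Gamma}\cap\partial\Omega^{\ell}_{\e}$) with a concrete volume-preserving flow of the form $\Phi(t,x)=x+th_i(\pi_i(x))$ near each $\Gamma_i$, where $\pi_i$ is the orthogonal projection onto $\Gamma_i$ and the $h_i\in W^{2,p}(\Gamma_i)$ describe the perturbed interfaces as graphs. One then runs the argument of \cite[Proposition 5.2]{Julin} with this flow, and passes from $W^{2,p}$ local minimality to $L^{1}$ local minimality via Corollary~\ref{cor:almgren}, exactly as in Step~2 of Theorem~\ref{local}. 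Without this explicit construction --- or some other device circumventing the corner singularities --- your proposal leaves the $\ell\geq 2$ case (which is the case of interest here) unproved.
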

\noindent {\bf Proof of Theorem \ref{localbis}}.
Unlike the situation in Theorem~\ref{local}, here $\overline{\partial\{u_k=1\}\cap\Oe}$  meets also the 
non-regular part of $\partial\Oe$. However, we can take advantage of the fact that we deal with a particular configuration, having flat interfaces. We denote by $\Gamma_1$,\dots, $\Gamma_k$ the $k$ flat interfaces of $u_k$.   As in the first step of the proof for Theorem~\ref{local}, 
 one starts by deducing the isolated local minimality of $u_k$ with respect to configurations whose interfaces are small $W^{2,p}$-perturbations for $p$ sufficiently large,  of $\Gamma_1$,\dots, $\Gamma_k$. To show this, one may assume that such interfaces are described by the graphs of functions $h_i\in W^{2,p}(\Gamma_i)$, $i=1,\dots, k$, with 
 $\sum_i\int_{\Gamma_i}h_i=0$, $\nabla h_i\cdot n_{\Gamma_i}=0$ on $\partial\Gamma_i$ away from corners and $\sum_i\|h_i\|_{W^{2,p}(\Gamma_i)}$ small enough. Thus, it is possible construct a volume preserving flow $\Phi$ connecting $u_k$ with the perturbed configurations  such that
 $\Phi(t,x)=x+th_i(\pi_i(x))$ in a neighborhood of $\Gamma_i$ for all $i=1,\dots, k$, where $\pi_i$ denotes the orthogonal projection on $\Gamma_i$. Now, one can argue exactly as in \cite[Proposition 5.2]{Julin}, using such a flow instead of the one constructed in \cite[Lemma 5.3]{Julin} (see also \cite[Theorem 6.2]{AFM}). 
Once the $W^{2,p}$-minimality is established,  the conclusion of the theorem follows exactly as in the second step of the proof of Theorem~\ref{local} (through an appeal to Corollary~\ref{cor:almgren}).
\qed

\noindent
{\bf Proof of Theorem \ref{main2}.} For $\ell=1$ this result reduces to Theorem \ref{main}.  After rescaling the problem onto the unit cube in $\ell+1$ dimensions, the identity \eqref{tEdefn}
is replaced by
\begin{eqnarray*}&&\frac{1}{\e^{\ell}}E^{\gamma}_{\Omega^{\ell}_{\e}}(u)=\tilde{E}^{\gamma}_{\e,\ell}(\tilde{u}):=\\&& \int_{\partial^*\{\tilde{u}=1\}\cap\Omega_1}\sqrt{\frac{1}{\e^2}(n_1^2+\ldots
+n_{\ell}^2)+n_{\ell+1}^2}\,\dH^{\ell}
+\gamma\int_{\Omega_1}\left(\frac{1}{\e^2}(\tilde{v}_{x_1}^2+\ldots+\tilde{v}_{x_{\ell}}^2)+\tilde{v}_{y_1}^2\right)\,dx_1\,\dots\,dx_{\ell}\,dy_1
\end{eqnarray*}
where for any $u\in BV(\Oe;\{\pm 1\})$, we now
denote by $\tilde{u}:\Omega_1\to\R$ the function satisfying $\tilde{u}(x_1,\ldots,x_{\ell},y_1)=u(\e x_1,\ldots,\e x_{\ell},y_1)$ with a similar
definition relating the original potential $v$ associated with $u$ to the rescaled one $\tilde{v}.$  With this modification, the proof
of the $\Gamma$-convergence result Theorem \ref{gc} proceeds without change.

Regarding the stability of the one-dimensional minimizer in higher dimensions, the statement and proof of
Proposition \ref{Stability} are unchanged in the setting where we replace $\Omega_\e$ by $\Omega^{\ell}_\e.$
Thus we again have that $u_k$ is stable with respect to $E^{\gamma}_{\Omega^{\ell}_{\e}}$ provided $\e<\pi\sqrt{\frac{k}{2\gamma}}$.
The proof of Theorem \ref{main2} then proceeds
as in the proof of Theorem \ref{main}, with the obvious alteration that the reflection of the minimizer $u_{\e_j}$ is carried out with respect
to all the `thin' variables $x_1,x_2,\ldots,x_{\ell}$, and using Theorem~\ref{localbis} instead of Theorem~\ref{local}. 
\qed

More generally, one can take $\Omega\subset\R^n$, $n\geq 2$ to be an arbitrary bounded smooth domain and consider the energy $E^{\gamma}_{\Omega^{\ell}_{\e}}$
in the setting where $\Omega^{\ell}_{\e}\subset\R^{n+l}$ is given by $\Omega^{\ell}_{\e}:= (0,\e)^{\ell}\times\Omega$ for some positive integer $\ell$. Then one can establish the following:

\begin{thm}\label{main3} Assume $\overline{u}:\Omega\to\R$ is the unique global minimizer of $E^{\gamma}_{\Omega}$ given by \eqref{Egammaintro}, with $\overline{\partial\{u=1\}\cap\Omega}\cap\partial\Omega=\emptyset$.  Assume $\overline{u}$ is regular in the sense described at the beginning of Section 3
and assume furthermore that $\overline{u}$ is stable in the sense of positivity of the second variation $\delta^2E^{\gamma}_{\Omega}(\overline{u};f)$ given by \eqref{gensecondvar}
for all nontrivial $f\in H^1(\Gamma)$ satisfying $\int_{\Gamma} f\,d\mathcal{H}^{n-1}=0$ with $\Gamma:=\partial\{\overline{u}=1\}\cap\Omega$.
Then there exists a positive number $a$ such that
for all sufficiently large integers $j$, $\overline{u}$ is also the unique minimizer of the $(n+\ell)$-dimensional energy $E^{\gamma}_{\Omega^{\ell}_{\e_j}}$
 where $\e_j:=\frac{a}{j}$.
 \end{thm}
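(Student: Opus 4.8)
\noindent\textbf{Proof proposal for Theorem~\ref{main3}.}
The plan is to run the same three--part argument as for Theorems~\ref{main} and~\ref{main2}, with the explicit lamellar minimizer replaced by $\overline u$ extended cylindrically. Throughout I identify $\overline u$ with the function on $\Omega^{\ell}_\e=(0,\e)^{\ell}\times\Omega$ that is constant in the thin variables $x_1,\dots,x_\ell$, write $\Gamma:=\partial\{\overline u=1\}\cap\Omega$ --- a compact $C^2$ hypersurface \emph{without} boundary, since $\overline\Gamma\cap\partial\Omega=\emptyset$ --- and set $\Sigma:=(0,\e)^{\ell}\times\Gamma$. The first ingredient is $\Gamma$--convergence: transplanting $\e^{-\ell}E^{\gamma}_{\Omega^{\ell}_\e}$ onto a fixed cube over $\Omega$ exactly as in Section~2 and as in the proof of Theorem~\ref{main2}, the argument of Theorem~\ref{gc} applies essentially verbatim --- a uniform energy bound forces the thin derivatives of both the competitors and the associated potentials to vanish in the limit, so the $\Gamma$--limit is supported on cylinders over sets $A\subset\Omega$, for which the rescaled perimeter equals $P_\Omega(A)$ and the potential is the cylindrical extension of the $\Omega$--potential, whence the limit energy is $E^{\gamma}_\Omega$; the recovery sequence is constant. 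I use only the standard consequence that, after rescaling, an $L^1$--limit of minimizers of $E^{\gamma}_{\Omega^{\ell}_{\e_j}}$ must minimize $E^{\gamma}_\Omega$, hence equal $\overline u$ --- the last step because $\overline u$ is assumed to be \emph{the} global minimizer (up to the sign ambiguity present when $m=0$, handled as in Theorem~\ref{main}).

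The one genuinely new computation is the stability of the cylinder. First, $\overline u$ is a critical point of $E^{\gamma}_{\Omega^{\ell}_a}$: $\Sigma$ has the same mean curvature as $\Gamma$ (the thin principal curvatures vanish), the potential of the cylinder is the cylindrical extension of the $\Omega$--potential $\overline v$, so~\eqref{2dcrit} on $\Sigma$ reduces to~\eqref{2dcrit} on $\Gamma$, while $\Sigma$ is smooth away from the edges of the thin cube and meets every flat face $\{x_i\in\{0,a\}\}$ orthogonally (it stays away from $(0,a)^{\ell}\times\partial\Omega$). For stability I would mimic Section~4: given $f\in H^1(\Sigma)$ with $\int_\Sigma f=0$, write $f=\overline f+g$ with $\overline f(\xi)=\fint_{(0,a)^{\ell}}f(\,\cdot\,,\xi)$ and $\int_{(0,a)^{\ell}}g(\,\cdot\,,\xi)=0$ for a.e.\ $\xi\in\Gamma$, so $\overline f\in H^1(\Gamma)$ with $\int_\Gamma\overline f\,\dH^{n-1}=0$. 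Exactly as in~\eqref{nocross}, every mixed $\overline f$--$g$ term in~\eqref{gensecondvar} vanishes: the cross terms in the tangential Dirichlet energy, in $\int_\Sigma\norm{B_\Gamma}^2 f^2$, and in $\int_\Sigma(\nabla v\cdot n)f^2$ because $g$ has zero slice--average, and the mixed Green's--function term because the cylindrical potential generated by $\overline f$ is constant on each slice. One is left with, in the notation of Section~4,
\[
\delta^2 E^{\gamma}_{\Omega^{\ell}_a}(\overline u;f)=a^{\ell}\,\delta^2 E^{\gamma}_{\Omega}(\overline u;\overline f)+\int_\Sigma\abs{\nabla_s g}^2+\int_\Sigma\bigl(\abs{\nabla_\Gamma g}^2-\norm{B_\Gamma}^2 g^2\bigr)+8\gamma\int_{\Omega^{\ell}_a}\abs{\nabla v_g}^2+4\gamma\int_\Sigma(\nabla\overline v\cdot n_\Gamma)\,g^2,
\]
with $\nabla_s$ the gradient in the thin variables. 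The first summand is $\ge0$, and $>0$ when $\overline f\neq0$, by the stability hypothesis on $\overline u$; for the remainder, the Poincar\'e inequality for zero--average functions on the cube gives $\int_\Sigma\abs{\nabla_s g}^2\ge(\pi/a)^2\int_\Sigma g^2$, and discarding the nonnegative terms $\int_\Sigma\abs{\nabla_\Gamma g}^2$ and $8\gamma\int_{\Omega^{\ell}_a}\abs{\nabla v_g}^2$ yields the lower bound $\bigl((\pi/a)^2-M\bigr)\int_\Sigma g^2$, where $M:=\sup_\Gamma\bigl(\norm{B_\Gamma}^2+4\gamma\abs{\nabla\overline v\cdot n_\Gamma}\bigr)<\infty$. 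Hence $\delta^2 E^{\gamma}_{\Omega^{\ell}_a}(\overline u;f)>0$ for every nontrivial admissible $f$ whenever $a<\pi/\sqrt M$; fix such an $a$, which is the $a$ of the statement.

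For the local minimality step, since $\overline\Sigma=[0,a]^{\ell}\times\Gamma$ meets the singular (edge) part of $\partial\Omega^{\ell}_a$ when $\ell\ge2$, Theorem~\ref{local} does not apply directly, and I would instead invoke the analogue of Theorem~\ref{localbis}, whose proof adapts without essential change: one first obtains $W^{2,p}$--local minimality ($p$ large) among competitors whose interface is a normal graph $h\in W^{2,p}(\Sigma)$ over $\Sigma$ with vanishing total average and $\nabla h\cdot n_{\partial\Sigma}=0$ on $\partial\Sigma$ away from corners --- through a volume--preserving flow respecting the cylindrical structure together with~\cite[Proposition 5.2]{Julin} --- and then upgrades to $L^1$--local minimality via Corollary~\ref{cor:almgren}. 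The point is that even reflection in all thin variables turns $\Sigma$ into $\R^{\ell}\times\Gamma$, which is $C^2$ and, since $\overline\Gamma\cap\partial\Omega=\emptyset$, stays strictly interior to $\R^{\ell}\times\Omega$, so Corollary~\ref{cor:almgren} applies; and the Neumann condition on $\partial\Sigma$ is precisely what makes the reflected graph perturbations $C^{1,\alpha}$ across the faces. Together with the previous step this makes $\overline u$ an isolated $L^1$--local minimizer in $\Omega^{\ell}_a$: there exist $\delta,C>0$ with $E^{\gamma}_{\Omega^{\ell}_a}(w)\ge E^{\gamma}_{\Omega^{\ell}_a}(\overline u)+C\norm{w-\overline u}_{L^1(\Omega^{\ell}_a)}^2$ whenever $\norm{w-\overline u}_{L^1(\Omega^{\ell}_a)}<\delta$ and $w$ has the prescribed mass.

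One then concludes exactly as in the proof of Theorem~\ref{main}: with $\e_j=a/j$, let $u_{\e_j}$ minimize $E^{\gamma}_{\Omega^{\ell}_{\e_j}}$; the $\Gamma$--convergence, applied with $\Omega^{\ell}_a$ as the fixed domain, forces the rescaled $u_{\e_j}$ to converge to $\overline u$ in $L^1(\Omega^{\ell}_a)$, so the $j^{\ell}$--fold even reflection $u^r_{\e_j}$ of $u_{\e_j}$ in the thin variables satisfies $\norm{u^r_{\e_j}-\overline u}_{L^1(\Omega^{\ell}_a)}\to0$; since even reflection preserves the Neumann Poisson problem, $E^{\gamma}_{\Omega^{\ell}_a}(u^r_{\e_j})=j^{\ell}E^{\gamma}_{\Omega^{\ell}_{\e_j}}(u_{\e_j})$ and likewise for $\overline u$, so inserting these into the local minimality inequality and dividing by $j^{\ell}$ gives $E^{\gamma}_{\Omega^{\ell}_{\e_j}}(u_{\e_j})\ge E^{\gamma}_{\Omega^{\ell}_{\e_j}}(\overline u)+C\,j^{\ell}\norm{u_{\e_j}-\overline u}_{L^1}^2$ for all $j$ large, contradicting the minimality of $u_{\e_j}$ unless $u_{\e_j}\equiv\overline u$, which also gives uniqueness. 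As for difficulty: the $\Gamma$--convergence is routine, and the stability computation is essentially mechanical once one sees --- exactly as in Section~4 --- that the mixed terms drop out, though it is the step that pins down $a$; the point requiring most care is the local minimality for $\ell\ge2$, namely checking that the reflection device of Theorem~\ref{localbis}, the regularity upgrade from $C^{1,\alpha}$ to $W^{2,p}$ for the reflected free boundary, and the flow construction of~\cite{Julin} all survive for a curved rather than flat cylindrical interface --- which I believe goes through verbatim as in the proof of Theorem~\ref{localbis}.
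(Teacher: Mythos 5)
Your proposal is correct, but the key step — the stability of $\overline u$ in the thin cylinder — is argued by a genuinely different route than the paper's. The paper proceeds by compactness: it takes an arbitrary normalized sequence $f_\e$, rescales to a fixed domain, extracts a weak $H^1$-limit $f_0=f_0(y)$, uses lower semicontinuity on each piece of the rescaled second variation, and identifies the limit of the Poisson potentials through the weak formulation, concluding $\liminf_\e\delta^2\tilde E^\gamma_\e(\overline u;\tilde f_\e)\geq\delta^2 E^\gamma_\Omega(\overline u;f_0)>0$. You instead carry over the algebraic decomposition of Section~4 to the curved cylinder: split $f=\overline f+g$ into slice-average and zero-slice-average fluctuation, show (as in \eqref{nocross}) that all cross terms vanish because the cylindrical potential of $\overline f$ depends only on $y$ and $g$ has zero slice-average, and reduce to $a^{\ell}\delta^2 E^\gamma_\Omega(\overline u;\overline f)$ plus a fluctuation part that a Poincar\'e inequality in the thin variables makes positive once $a<\pi/\sqrt{M}$. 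Your approach buys an explicit threshold for $a$ and parallels the earlier lamellar computation; the paper's is softer but avoids verifying cancellation of each cross term. One further substantive point in your favor: the paper claims that Theorem~\ref{local} ``still applies here,'' but for $\ell\geq 2$ the cylinder $\overline\Sigma=[0,\e]^\ell\times\overline\Gamma$ meets the edge singularities of $(0,\e)^\ell$, so the hypothesis that $\overline{\partial\{u=1\}\cap\Omega^\ell_\e}$ meets only the regular part of $\partial\Omega^\ell_\e$ fails; your explicit fall-back to the Theorem~\ref{localbis} mechanism (normal flow over a curved $\Gamma$ compatible with even reflection, then Corollary~\ref{cor:almgren}) is the correct repair, and you rightly flag the $W^{2,p}$/flow adaptation for a curved interface as the point deserving scrutiny. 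Both arguments lead to the same conclusion; yours is more explicit about what the paper glosses over.
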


 \noindent
 {\bf Proof.} The analog of the $\Gamma$-convergence result Theorem \ref{gc} requires no significant
 change in its proof and again Theorem \ref{local} still applies here. Regarding a stability result analogous
 to Proposition \ref{Stability}, of course one cannot expect such an explicit determination of the critical value of
 $\e$ below which one has stability of $\overline{u}$ with respect to $E^{\gamma}_{\Omega^{\ell}_\e}$. Instead we will argue that for $\e$ small enough, the stability
 of $\overline{u}$ in $\Omega^{\ell}_\e$ follows from its assumed stability in $\Omega.$

 For ease of presentation only, we will take $\ell=1$ and to simplify the notation we will write $\Omega_\e$ for $\Omega^1_\e$.
 Below we use $x$ to denote a variable in $(0,\e)$ and $y=(y_1,\ldots,y_n)$ to denote a variable on $\Gamma$ or in
 $\Omega$ as the context requires. Denoting $\Gamma_{\e}:=(0,\e)\times\Gamma$ we let $f_{\e}\in H^1(\Gamma_{\e})$
 be any nontrivial sequence satisfying the conditions
 \beq
 \int_{\Gamma_{\e}}f_{\e}\,dx\,d\mathcal{H}^{n-1}=0\quad \mbox{and}\quad\int_{\Gamma_{\e}}f_{\e}^2\,dx\,d\mathcal{H}^{n-1}=\e,\label{normal}
 \eeq
 where the last requirement is a convenient normalization. We wish to argue that $\delta^2 E^{\gamma}_{\Omega_\e}(\overline{u};f_{\e})>0$
 for all $\e$ small. 

Using that $\norm{B_{\Gamma_{\e}}}=\norm{B_{\Gamma}}$ we see that
\begin{eqnarray*}
&&
 \delta^2 E^{\gamma}_{\Omega_\e}(\overline{u};f_{\e})=\int_{\Gamma_{\e}} \left(\abs{\nabla_{\Gamma} f_{\e}}^2+
 (\frac{\partial f_{\e}}{\partial x})^2\right)\,dx\,d\mathcal{H}^{n-1}-
 \int_{\Gamma_{\e}} \norm{B_{\Gamma}}^2 f_{\e}^2\,dx\,d\mathcal{H}^{n-1}\\
 &&+8\gamma\int_{\Oe}\abs{\nabla v_{f_{\e}}}^2\,dx\,dy+4\gamma  \int_{\Gamma_{\e}} \nabla \overline{v}(y)\cdot n_{\Gamma}\,f_{\e}^2
\,dx\,d\mathcal{H}^{n-1}.
\end{eqnarray*}
Here $\overline{v}$ solves the Poisson equation with right-hand side $\overline{u}$ and
we adapt the approach of Section 4 in writing the Green's function term using the solution $v_{f_{\e}}$ to the Poisson equation
\beq
-\Delta v_{f_{\e}}=f_{\e}\,\delta_{\Gamma_{\e}}\quad\mbox{in}\;\Oe,\quad\nabla v_{f_{\e}}\cdot n_{\partial\Oe}=0\;\mbox{on}\;\partial\Oe.
\label{vP}
\eeq
Then changing variables by introducing $x_1=\frac{x}{\e}$ and setting $\tilde{f}_{\e}(x_1,y)=f_{\e}(\e x_1,y)$ and
$ \tilde{v}_{\tilde{f}_{\e}}(x_1,y)=v_{f_{\e}}(\e x_1,y)$
we easily calculate that
\begin{eqnarray}
&&
 \frac{1}{\e}\delta^2 E^{\gamma}_{\Omega_\e}(\overline{u};f_{\e})=\delta^2\tilde{E}^{\gamma}_{\e}(\overline{u};\tilde{f}_{\e}):=\nonumber\\ &&
 \int_{\Gamma_1} \left(\abs{\nabla_{\Gamma} \tilde{f}_{\e}}^2+\frac{1}{\e^2}
 (\frac{\partial \tilde{f}_{\e}}{\partial x_1})^2\right)\,dx_1\,d\mathcal{H}^{n-1}-
 \int_{\Gamma_1} \norm{B_{\Gamma}}^2 \tilde{f}_{\e}^2\,dx_1\,d\mathcal{H}^{n-1}\nonumber\\
 &&+8\gamma\int_{\Omega_1}\left(\abs{\nabla_y\, \tilde{v}_{\tilde{f}_{\e}}}^2+\frac{1}{\e^2}
 (\frac{\partial \tilde{v}_{\tilde{f}_{\e}}}{\partial x_1})^2\right)\,dx_1\,dy
 +4\gamma  \int_{\Gamma_1} \nabla \overline{v}(y)\cdot n_{\Gamma}\,\tilde{f}_{\e}^2
\,dx_1\,d\mathcal{H}^{n-1}.\label{secvar2}
 \end{eqnarray}
Also we note that \eqref{normal} transforms to the conditions
\beq
 \int_{\Gamma_{1}}\tilde{f}_{\e}\,d\mathcal{H}^n=0,\quad \mbox{and}\quad\int_{\Gamma_{1}}\tilde{f}_{\e}^2\,d\mathcal{H}^n=1.\label{normal2}
 \eeq

 Now we may assume $\liminf_{\e\to 0}\delta^2 \tilde{E}^{\gamma}_{\e}(\overline{u};\tilde{f}_{\e})<\infty$ or else we are done.
Such a bound implies uniform bounds along a subsequence $\{\tilde{f}_{\e_i}\}$ of the form
\[ \int_{\Gamma_1} \abs{\nabla_{\Gamma} \tilde{f}_{\e_i}}^2
\,dx_1\,d\mathcal{H}^{n-1}<C,\qquad   \int_{\Gamma_1}\big(\frac{\partial \tilde{f}_{\e_i}}{\partial x_1}\big)^2\,dx_1\,d\mathcal{H}^{n-1}<C\e_i^2.
\]
These bounds and \eqref{normal2} in turn lead to the following convergences along a further subsequence (with subsequential notation suppressed):
\begin{eqnarray}
&&\tilde{f}_{\e_i}\rightharpoonup f_0\;\mbox{in}\; H^1(\Gamma_1),\quad \tilde{f}_{\e_i}\to f_0\;\mbox{in}\; L^2(\Gamma_1)\quad\mbox{for some}\;f_0\in H^1(\Gamma_1)\label{con1}\\
&&\mbox{such that}\quad  \int_{\Gamma_{1}}f_0\,dx_1\,d\mathcal{H}^{n-1}=0,\quad\int_{\Gamma_{1}}f_0^2\,dx_1\,d\mathcal{H}^{n-1}=1,\quad\mbox{and}\quad f_0=f_0(y)\;\mbox{only}.\nonumber
\end{eqnarray}
Applying \eqref{con1} and the fact that $f_0$ is independent of $x_1$ to the first, second and fourth integrals in \eqref{secvar2} we conclude that
\begin{eqnarray}
&&\liminf_{\e_i\to 0}\int_{\Gamma_1} \left(\abs{\nabla_{\Gamma} \tilde{f}_{\e_i}}^2+\frac{1}{\e_i^2}
 (\frac{\partial \tilde{f}_{\e_i}}{\partial x_1})^2\right)\,dx_1\,d\mathcal{H}^{n-1}-
 \int_{\Gamma_1} \norm{B_{\Gamma}}^2 \tilde{f}_{\e_i}^2\,dx_1\,d\mathcal{H}^{n-1}\nonumber\\
&& +4\gamma  \int_{\Gamma_1} \nabla \overline{v}(y)\cdot n_{\Gamma}\,\tilde{f}_{\e_i}^2
\,dx_1\,d\mathcal{H}^{n-1}\nonumber\\
&&
\geq  \int_{\Gamma_1}\abs{\nabla_{\Gamma} f_0}^2\,dx_1\,d\mathcal{H}^{n-1}-
 \int_{\Gamma_1} \norm{B_{\Gamma}}^2 f_0^2\,dx_1\,d\mathcal{H}^{n-1}+4\gamma  \int_{\Gamma_1} \nabla \overline{v}(y)\cdot n_{\Gamma}\,f_0^2
\,dx_1\,d\mathcal{H}^{n-1}\nonumber\\
&&=\int_{\Gamma}\abs{\nabla_{\Gamma} f_0}^2\,d\mathcal{H}^{n-1}-
 \int_{\Gamma} \norm{B_{\Gamma}}^2 f_0^2\,d\mathcal{H}^{n-1}+4\gamma  \int_{\Gamma} \nabla \overline{v}(y)\cdot n_{\Gamma}\,f_0^2
\,d\mathcal{H}^{n-1}.\label{3terms}
\end{eqnarray}
It remains to handle the third integral of \eqref{secvar2}. To this end, we note that uniform bounds on
the sequence $\{\tilde{v}_{\tilde{f}_{\e_i}}\}$ follow as did the ones for $\{\tilde{f}_{\e_i}\}$ so that
(after passing to a subsequence) one finds
\beq
\tilde{v}_{\tilde{f}_{\e_i}}\rightharpoonup w\;\mbox{in}\; H^1(\Omega_1)\quad\mbox{and}\quad \tilde{v}_{\tilde{f}_{\e_i}}\to w\;\mbox{in}\; L^2(\Omega_1)\quad\mbox{for some}\;w\in H^1(\Omega_1)
\label{con3}\eeq
satisfying $w=w(y)$ only and $\int_{\Omega}w\,dy=0.$
It follows that for the third integral in \eqref{secvar2} one has
\beq
\liminf_{\e_i\to 0}\int_{\Omega_1}\left(\abs{\nabla_y\, \tilde{v}_{\tilde{f}_{\e_i}}}^2+\frac{1}{{\e_i}^2}
 (\frac{\partial \tilde{v}_{\tilde{f}_{\e_i}}}{\partial x_1})^2\right)\,dx_1\,dy\geq
 \int_{\Omega}\abs{\nabla w}^2\,dy.\label{lscend}
 \eeq
As a last step in establishing the stability of $\overline{u}$ in $\Oe$ we must identify $w$ as
the solution to the Poisson problem
\beq
-\Delta w=f_0\,\delta_{\Gamma},\qquad \nabla w\cdot n_{\partial \Omega}=0\;\mbox{on}\;\partial\Omega.
\label{widen}
\eeq
To see this we note from \eqref{vP} that $\tilde{v}_{\tilde{f}_{\e}}$ satisfies the equation
\[
-\Delta_y \tilde{v}_{\tilde{f}_{\e}}-\frac{1}{\e^2}\frac{\partial^2 \tilde{v}_{\tilde{f}_{\e}}}{\partial x_1^2}
=\tilde{f}_{\e}\,\delta_{\Gamma_1}\quad\mbox{in}\;\Omega_1,\quad\nabla \tilde{v}_{f_{\e}}\cdot n_{\partial\Omega_1}=0\;\mbox{on}\;\partial\Omega_1.
\]
Multiplication by a test function $\phi\in C^{\infty}\big(\overline{\Omega}\big)$ (independent of $x_1$) and integration by parts then leads to
\[
\int_{\Omega_1}\nabla_y  \tilde{v}_{f_{\e}}\cdot\nabla \phi\,dx_1\,dy=\int_{\Gamma_1}\tilde{f}_{\e}\phi\,dx_1\,dy.\]
Applying \eqref{con1} and \eqref{con3} and passing to the limit as $\e_i\to 0$  we obtain that $w$ indeed solves \eqref{widen}.
Combining \eqref{3terms} and \eqref{lscend} we conclude that
\[
 \liminf_{\e\to 0}\delta^2 \tilde{E}^{\gamma}_{\e}(\overline{u};\tilde{f}_{\e})\geq
\delta^2E^{\gamma}_{\Omega}(\overline{u};f_0)>0\]
by invoking the assumed stability of $\overline{u}$ as a critical point of $E^{\gamma}_{\Omega}$. Hence,
$\overline{u}$ remains stable in the thin domain $\Omega_{\e}$ as well.

The rest of the proof now follows as in the proof of Theorem \ref{main}, through an appeal to Theorem \ref{local} and the reflection
argument used before.\qed
\begin{rmrk}
The assumption $\overline{\partial\{u=1\}\cap\Omega}\cap\partial\Omega=\emptyset$ is certainly restrictive. It could be removed if one could extend Theorem~\ref{local} 
to the case where $\overline{\partial\{u=1\}\cap\Oe}$ also meets the non-regular part of $\partial\Oe$. Such an extension, which is very likely possible, would however require one to modify
some of the arguments presented in \cite{Julin}. As this goes beyond the purposes of the present paper,  we decided to state the previous theorem under more restrictive assumptions just to illustrate the  scope of the method.
\end{rmrk}
\vskip.2in
\noindent
{\bf Acknowledgments.} The research of M.M. was supported by the ERC grant 207573 ``Vectorial Problems."
The research of P.S.was supported by NSF grant DMS-1101290.

\end{document}